\newcommand{\ind}{1\hspace{-2.5mm}{1}}
\theoremstyle{plain}
\newtheorem{theorem}{Theorem}
\newtheorem{proposition}{Proposition}
\newtheorem{corollary}{Corollary}
\newtheorem{lemma}{Lemma}
\theoremstyle{remark}
\newtheorem*{remark}{Remark}
\theoremstyle{definition}
\newtheorem{definition}{Definition}
\begin{document}
	
	\author[Sebastian Rosengren and Pieter Trapman]{Sebastian Rosengren$^1$ and Pieter Trapman$^2$}      
	\address{$^{1,}$$^{2}$Department of Mathematics, Stockholm University, 106 91 Stockholm, Sweden.}
	\email{$^1$rosengren@math.su.se, $^2$ptrapman@math.su.se}

	\title[Dynamic Erd\H{o}s-Rényi Graph]{A Dynamic Erd\H{o}s-Rényi Graph Model} 
	\maketitle

	\begin{abstract}
		In this article we introduce a dynamic Erd\H{o}s-R\'enyi graph model, in which, independently for each vertex pair, edges appear and disappear according to a Markov on-off process.
		
		In studying the dynamic graph we present two results. The first being on how long it takes for the graph to reach stationarity. We give an explicit expression for this time, as well as proving that this is the fastest time to reach stationarity among all strong stationary times.
		
		The main result concerns the  time it takes  for the dynamic graph to reach a certain number of edges. We give an explicit expression for the expected value of such a time, as well as study its asymptotic behavior. This time is related to the first time the dynamic Erd\H{o}s-R\'enyi graph contains a cluster exceeding a certain size.
	\end{abstract}
	
	\keywords{
		Erd\H{o}s-Rényi Graph; 
		Markov Process; 
		fastest time to stationarity; 
		strong stationary times; 
		hitting times} 
	
	
	\section{Introduction}
	The Erd\H{o}s-R\'enyi graph, in this text called the \textit{static} Erd\H{o}s-R\'enyi graph, is a well-studied model for random graphs, which is either (i) consisting of $n$ vertices and $m$ edges, where the edges are assigned uniformly to the $\binom{n}{2}$ vertex pairs---this graph model is denoted $G(n,m)$; or (ii) consisting of $n$ vertices where edges are assigned independently between vertex pairs with probability $p$---this graph model is denoted $G(n,p)$, see \cite{Boll} for more details and many properties of the model.
	In this article we introduce a natural dynamic version of such a model: the dynamic Erd\H{o}s-R\'enyi graph.
	
	Before moving on we set some notation: Throughout $N=\binom{n}{2}$. Furthermore, we use the asymptotic order notation: $f(n) = O(g(n)) \iff  |f(n)|\leq M |g(n)|$ for large $n$ and some $M<\infty$; $f(n)=\Theta(g(n))$ if and only if  both $f(n)=O(g(n))$  and $g(n)=O(f(n))$. Finally, $f(n) = o(g(n)) \iff \lim\limits_{n\to \infty}\frac{|f(n)|}{|g(n)|}=0$. Furthermore we say that an event happens ``with high probability'' (w.h.p.) if the probability of the event converges to 1 as $n \to \infty$. Throughout (unless otherwise stated), all asymptotic's are for the limit $n \to \infty$.
	\subsection{The dynamic Erd\H{o}s-R\'enyi graph}
	For $\alpha, \beta>0$ and $n$ a positive integer, the dynamic Erd\H{o}s-R\'enyi graph $\{G(t),\ t\geq 0 \}$ is a stochastic process evolving according to the following dynamics,
	\begin{enumerate}[(i)]
		\item The number of vertices is fixed at $n$.
		\item Independently for each vertex pair, if no edge is present an edge is added after an Exp($\frac{\beta}{n-1}$)-distributed time; if an edge is present, the edge is removed after an Exp($\alpha$)-distributed time.
	\end{enumerate}
	Note that (ii) can be replaced by,
	\begin{enumerate}
		\item[(iia)] independently for each vertex pair, the state of an edge (present or not present) is updated at the points of a Poisson process with intensity $\lambda = \alpha+\frac{\beta}{n-1}$.  
		Independently of the Poisson process and previous states of the edges, with probability $p = \frac{\beta}{\beta+(n-1) \alpha}$ an edge will be present after the update, and with probability $q=1-p$ it will not be present.  
	\end{enumerate}
	
	\begin{remark}
		The choice of birth rate $\frac{\beta}{n-1}$ and not  $\frac{\beta}{n}$ is because if the birth rate equals $\frac{\beta}{n-1}$  then if $\beta = \alpha$, the dynamic graph converges to a \textit{critical} Erd\H{o}s-R\'enyi graph ($G(n,p)$ with $p = \frac{1}{n}$). However, for large $n$ it makes no difference which of the two birth rates is chosen. 
		
		We note that by changing the 
		time-unit we can (without loss of generality) set $\beta = 1$. We choose 
		to keep $\beta$, so that probabilities are always expressed as quotients 
		of rates or products of rates and times, while hitting times are always 
		expressed as the inverse of rates.
	\end{remark}

	\begin{remark}
		The dynamic Erd\H{o}s-R\'enyi graph model has been used in \cite{Altm95} (without using that name) as the underlying dynamic network structure on which an epidemic is spreading. In that paper the network is assumed to start in stationarity and apart from the stationary degree distribution no further properties of the dynamic graph process are studied.   
	\end{remark}

	Let $\{\chi_{u,v}(t),\ t\geq 0 \}$ denote the indicator process---being 1, when an edge is present between vertex $u$ and $v$ and 0 otherwise. We refer to this process as an edge process. This is, by definition, a birth-death process on $\{0,1\}$ with birth-rate $\lambda = \frac{\beta}{n-1}$ and death-rate $\mu = \alpha$, also known as an on-off process. We think of the dynamic Erd\H{o}s-R\'enyi graph $\{G(t),\ t\geq 0 \}$ as being composed of these i.i.d.\ processes, with $G(t) = (\chi_{1,2}(t),\dots, \chi_{n-1,n}(t))$.
	
	Throughout we assume that the underlying probability space has enough structure so that (iia) holds, i.e.\ that the edge processes are generated according to (iia). This means that the probability space has a filtration $\{\mathcal{F}_t,\ t\geq 0\}$, where $\mathcal{F}_t$ is the information generated by the update times and corresponding edge updates up to time $t$, and that $\{G(t),\ t\geq 0 \}$ is adapted to this filtration. We shall see that, for our purposes, this assumption can be made without loss of generality.
	\subsection{The fastest time to stationarity}
	\label{MRI}
	Below, we see that the distribution of $\{G(t),\ t\geq 0 \}$ converges to the distribution of a static Erd\H{o}s-R\'enyi graph with edge probability $p = \frac{\beta}{\beta + (n-1) \alpha}$, which is also the stationary distribution of the dynamic graph. In Section \ref{FTTS} we construct a time $T_s$, called the \textit{fastest time to stationarity} for $\{G(t),\ t\geq 0 \}$ \cite{Fill}, with the following properties,
	\begin{enumerate}[(i)]
		\item $G(T_s)$ is distributed according to the stationary distribution of $\{G(t),\ t\geq 0 \}$, and is independent of $T_s$;
		\item $\{G(T_s+s);\ s\geq 0 \}$ is a stationary process, and is independent of $T_s$;
		\item if $T'$ is any other random variable satisfying (i) and (ii) then for all $t>0$,
		\[
		\mathbb{P}(T<t) \geq \mathbb{P}(T'<t)
		\]
		i.e. $T$ is the stochastically smallest time satisfying (i) and (ii).
	\end{enumerate}
	
	A key part in constructing such a time to stationarity is noting that when an edge process $\{\chi_{u,v}(t),\ t\geq 0 \}$ enters stationarity it stays there. Since the dynamic graph, $G(t) = (\chi_{1,2}(t),\dots, \chi_{n-1,n}(t))$ is composed of edge processes and the processes are independent, the dynamic graph should  be in stationarity if all edge processes have entered stationarity. Hence we proceed by finding the fastest times to stationarity for the underlying edge processes, i.e. $\{T_{u,v},\ \forall (u,v) \}$, and then show that the maximum of these is indeed the fastest time to stationarity for the dynamic graph.
	
	In order to derive the stationary distribution for the dynamic graph process, we note that the following result for the underlying edge processes is immediate from defining property (iia) of the dynamic Erd\H{o}s-R\'enyi graph:
	
	\begin{lemma}
		\label{edgelemma}
		\begin{enumerate}
			\item[(a)] 	For $u,v \in V$, the edge processes $\{\chi_{u,v}(t),\ t\geq 0\}$ are independent ergodic Markov processes on $\{0,1\}$, with probability transition functions equal to,

			\begin{align*}
				&\mathbb{P}(\chi_{u,v}(t) = 1 |\chi_{u,v}(0)=0) = p_{0,1}(t) = \frac{\beta}{\beta + (n-1)\alpha}  \left(1-  e^{-(\alpha+\frac{\beta}{n-1}) t}\right) \\
				& \mathbb{P}(\chi_{u,v}(t) = 1 |\chi_{u,v}(0)=1) = p_{1,1}(t) =
				e^{-(\alpha+\frac{\beta}{n-1}) t} + \frac{\beta}{\beta + (n-1)\alpha}  \left(1-e^{-(\alpha+\frac{\beta}{n-1}) t}\right)
			\end{align*}
			and stationary distribution $\pi$ equal to,
			$$
			\pi(1) = \frac{\beta}{\beta + (n-1)\alpha} \qquad \text{and} \qquad
			\pi(0) = \frac{(n-1)\alpha}{\beta + (n-1)\alpha}.
			$$\\
			\item[(b)] 	The dynamic Erd\H{o}s-R\'enyi graph $\{G(t),\ t\geq 0 \}$ is an ergodic Markov process with finite state space and with unique stationary and limiting distribution equal to that of a static Erd\H{o}s-R\'enyi graph $G(n,p)$ with edge probability $p=\frac{\beta}{\beta + (n-1)\alpha}$.
		\end{enumerate}
		
	\end{lemma}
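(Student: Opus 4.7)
The plan is to exploit the alternative description (iia) of the dynamics directly: it already encodes both the Markov structure and the independence of the edge processes, so no further probabilistic machinery is needed, and the paper itself indicates that the result is ``immediate'' from this description.

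For part (a), I would first argue that each edge process $\{\chi_{u,v}(t),\ t\geq 0\}$ is a continuous-time Markov chain on $\{0,1\}$. Under (iia), updates occur at the jump times of a Poisson process of rate $\lambda = \alpha + \tfrac{\beta}{n-1}$, and the post-update state is an independent Bernoulli($p$) with $p = \tfrac{\beta}{\beta+(n-1)\alpha}$. The strong Markov property of the Poisson process, combined with the independence of the Bernoulli outcomes, yields the Markov property. Independence across distinct vertex pairs is immediate because the Poisson processes and the Bernoulli variables attached to different pairs are independent by construction in (iia). To derive the transition functions I would condition on whether any update occurs in $[0,t]$: on the event of probability $e^{-\lambda t}$ that no update occurs, the state is unchanged; on the complementary event, the state at time $t$ equals the outcome of the \emph{last} update before $t$, which is Bernoulli($p$) independently of $\chi_{u,v}(0)$. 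This produces the two displayed formulas in one stroke. The stationary distribution then falls out by either letting $t\to\infty$ (which yields $\pi(1)=p$ from both transition functions) or by checking the detailed balance equation $\pi(0)\,\tfrac{\beta}{n-1}=\pi(1)\,\alpha$.

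For part (b), I would observe that $\{G(t),\ t\geq 0\}$ is the vector of $N$ independent copies of the edge process, and hence is itself a Markov process on the finite space $\{0,1\}^N$ whose transition semigroup factors as a product over vertex pairs. Irreducibility and aperiodicity transfer from each factor (any configuration can be reached from any other by a sequence of single-edge flips, each carried out at strictly positive rate), so the chain is ergodic with a unique stationary and limiting distribution. By independence, this distribution is the product measure assigning probability $p$ to each coordinate, which is precisely the law of the static Erd\H{o}s-R\'enyi graph $G(n,p)$.

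There is no serious obstacle here; the argument is essentially bookkeeping from (iia). The only point requiring mild care is the conditioning step in the transition-probability computation, where one needs to note that on the event $\{$at least one update occurs in $[0,t]\}$ the last update time before $t$ is well-defined and its outcome is independent of the initial state, so that the result of conditioning on the \textbf{last} rather than the first update is legitimate.
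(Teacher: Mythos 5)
Your proof is correct and follows the exact approach the paper indicates: the paper simply declares Lemma 1 to be ``immediate from defining property (iia)'' and gives no further argument, and your conditioning on whether any Poisson update occurs in $[0,t]$ (using that the outcome of the last update is an independent Bernoulli($p$)) is precisely the bookkeeping the authors had in mind. The product-structure argument for part (b) is likewise the intended one.
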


	By (iia) in the definition of $\{G(t),\ t\geq 0 \}$, we immediately see that after the first update, i.e.\ after an exponentially distributed time with parameter $\alpha+\frac{\beta}{n-1}$, an edge process is in stationarity. It will be proven in Section \ref{FTTS} that this is also the fastest time to stationarity for an edge process given that it starts in state $0$ or $1$. We deduce that the fastest time to stationarity $T_s$ for the dynamic graph is distributed as the maximum of $N=\binom{n}{2}$ (the number of edge processes) independent exponentially distributed random variables with parameter $\alpha+\frac{\beta}{n-1}$. The following will be proved in Section \ref{FTTS}:
	\begin{theorem}
		\label{Fastesttimetostationary} 
		Let $\{G(t),\ t\geq 0 \} $ be the dynamic Erd\H{o}s-R\'enyi graph starting in a given state, i.e. $P(G(0)=\sigma)=1$ for some $\sigma$ in the state space. Let $T_{u,v}$ be the fastest time to stationarity for $\{\chi_{u,v}(t),\ t\geq 0 \}$.
		Then,
		\[
		T_s = \max_{u,v} \{T_{u,v}\}
		\]
		is the fastest time to stationarity for the dynamic Erd\H{o}s-R\'enyi graph.
		Furthermore, its distribution function is given by,
		\begin{equation}
			\label{distfasttime}
			\mathbb{P}(T_s \leq  t) = \left( 1 - e^{-(\alpha+\frac{\beta}{n-1}) t}\right) ^{N}.
		\end{equation}
	\end{theorem}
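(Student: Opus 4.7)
The plan is to verify properties (i)–(iii) of the fastest strong stationary time for $T_s=\max_{u,v}T_{u,v}$, leaning on the single-edge analysis that is carried out separately in Section \ref{FTTS}, and to deduce \eqref{distfasttime} from the independence of the $T_{u,v}$'s. Since Lemma \ref{edgelemma}(a) and representation (iia) give independent edge processes with first update time $\mathrm{Exp}\bigl(\alpha+\frac{\beta}{n-1}\bigr)$, and since Section \ref{FTTS} identifies this first update time as the fastest strong stationary time $T_{u,v}$ of the edge process, $T_s$ is a stopping time with respect to $\{\mathcal{F}_t\}$ and its distribution is the maximum of $N$ i.i.d.\ exponentials, which is exactly \eqref{distfasttime}.

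To check (i) and (ii) I would use representation (iia) once more: the value assigned at each update is an i.i.d.\ $\mathrm{Bernoulli}(p)$ draw, independent of all update times and of all other edges. Since $T_{u,v}\le T_s$ for every vertex pair, the value $\chi_{u,v}(T_s)$ coincides with the outcome of its most recent update on $[0,T_s]$ and is therefore $\mathrm{Bernoulli}(p)$, independent of $T_s$ (which is a functional of the update times only) and independent across vertex pairs. This yields that $G(T_s)$ has the stationary product distribution of Lemma \ref{edgelemma}(b) and is independent of $T_s$, and (ii) then follows by applying the strong Markov property at the stopping time $T_s$.

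The main obstacle is the optimality (iii). I plan to use the separation-distance characterisation of the fastest strong stationary time, which for a finite-state Markov chain started at $\sigma_0$ states that the stochastically smallest strong stationary time $T^\ast$ satisfies $\mathbb{P}(T^\ast>t)=s(t)$, where $s(t)=1-\min_{\sigma}p_t(\sigma_0,\sigma)/\pi(\sigma)$ is the separation distance. Because $G(t)$ is a product of the $N$ independent edge chains, the likelihood ratio $p_t/\pi$ factorises across vertex pairs, and minimising term by term gives
\[
s_G(t)\;=\;1-\prod_{u,v}\bigl(1-s^{(u,v)}(t)\bigr).
\]
Section \ref{FTTS} identifies $s^{(u,v)}(t)=\mathbb{P}(T_{u,v}>t)=e^{-(\alpha+\beta/(n-1))t}$, so by independence of the $T_{u,v}$'s,
\[
\mathbb{P}(T_s>t)\;=\;1-\prod_{u,v}\mathbb{P}(T_{u,v}\le t)\;=\;1-\prod_{u,v}\bigl(1-s^{(u,v)}(t)\bigr)\;=\;s_G(t).
\]
Hence $T_s$ attains the separation-distance lower bound and is the stochastically smallest strong stationary time, establishing (iii). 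The trickiest point to get right will be the factorisation of $s_G(t)$: it uses that the initial state is deterministic (so $\sigma_0$ is itself a product state) and that the minimum of a product of non-negative factors is the product of the componentwise minima, both of which are straightforward here but should be stated explicitly.
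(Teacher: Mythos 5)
Your proposal is correct and follows essentially the same route as the paper: show $T_s=\max_{u,v}T_{u,v}$ is a strong stationary time, then verify it achieves the separation lower bound by exploiting the product structure of the chain, which makes the separation of $G(t)$ factorise as $1-s_G(t)=\prod_{u,v}\bigl(1-s^{(u,v)}(t)\bigr)=\mathbb{P}(T_{u,v}\le t)^N=\mathbb{P}(T_s\le t)$. The paper simply writes $a(t)=a_{u,v}(t)^N$ without spelling out the ``minimum of a product of non-negative factors is the product of componentwise minima'' step, and it treats the strong-stationarity of $T_s$ as immediate from (iia), so your version is slightly more explicit but not different in substance.
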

	
	Concerning the asymptotic behavior of (\ref{distfasttime}), we show that it is very likely that the graph enters stationarity roughly at time $\frac{2 \log (n)}{\alpha}$.
	As $T_s$ is the maximum of $N$ independent exponentially distributed random variables, it follows that, properly scaled, it converges in distribution to a Gumbel distributed random variable. In order to be self-containing a proof for the following class-room result will be provided in Section \ref{FTTS}:
	\begin{corollary}
		\label{asymptoticlemma}
		Let $T_s$ be the fastest time to stationarity for the dynamic Erd\H{o}s-R\'enyi graph. Then for all $x\in \mathbb{R}$,
		$$
		\mathbb{P}( \alpha T_s - 2 \log (n) + \log 2 \leq  x) \to e^{-e^{-x}}  \text{ as } n \to \infty\\
		$$
		Furthermore, 
		$\mathbb{E}(T_s) = O(\log (n))$.
	\end{corollary}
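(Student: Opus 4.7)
The plan is to directly substitute a suitable scaling into the explicit distribution function (\ref{distfasttime}) and perform an asymptotic expansion. Set $t_n(x) = \frac{1}{\alpha}\bigl(2\log n - \log 2 + x\bigr)$, so that $\alpha T_s - 2\log(n) + \log 2 \leq x$ is equivalent to $T_s \leq t_n(x)$. Factor the exponent as
\[
e^{-(\alpha + \frac{\beta}{n-1})\,t_n(x)} \;=\; e^{-\alpha\, t_n(x)}\cdot e^{-\frac{\beta}{n-1} t_n(x)}.
\]
For the first factor, by choice of $t_n(x)$, $e^{-\alpha t_n(x)} = \frac{2e^{-x}}{n^2}$. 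For the second factor, note that $t_n(x) = O(\log n)$ and $\beta/(n-1) = O(1/n)$, hence $e^{-\frac{\beta}{n-1} t_n(x)} = 1 + o(1)$ for every fixed $x$.

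Combining these with $N = \binom{n}{2} = \frac{n(n-1)}{2} \sim \frac{n^2}{2}$, one obtains
\[
N\, e^{-(\alpha + \frac{\beta}{n-1})\,t_n(x)} \;=\; \frac{n(n-1)}{2}\cdot\frac{2e^{-x}}{n^2}\bigl(1+o(1)\bigr) \;\longrightarrow\; e^{-x}.
\]
Plugging this into (\ref{distfasttime}) and applying the standard limit $(1 - a_n/N)^N \to e^{-a}$ whenever $a_n \to a$ gives
\[
\mathbb{P}\bigl(T_s \leq t_n(x)\bigr) \;=\; \left(1 - e^{-(\alpha+\frac{\beta}{n-1})\,t_n(x)}\right)^{N} \;\longrightarrow\; e^{-e^{-x}},
\]
which is the claimed convergence.

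For the expectation bound, I would use the representation of $T_s$ as the maximum of $N$ i.i.d.\ exponentials with rate $\alpha + \frac{\beta}{n-1}$ from Theorem \ref{Fastesttimetostationary}. A standard computation gives
\[
\mathbb{E}(T_s) \;=\; \frac{1}{\alpha + \frac{\beta}{n-1}}\sum_{k=1}^{N}\frac{1}{k} \;\leq\; \frac{1}{\alpha}\bigl(\log N + 1\bigr) \;=\; O(\log n),
\]
since $\log N = \log \binom{n}{2} \leq 2\log n$. No genuine obstacle arises; the only point requiring care is keeping the correction term $\beta/(n-1)$ in the exponent under control long enough to see that it contributes only a $1+o(1)$ factor, and verifying that the $\log 2$ centering is exactly what is needed to absorb the factor $2$ coming from $N \sim n^2/2$.
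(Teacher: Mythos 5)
Your proof of the distributional convergence is correct and takes essentially the same route as the paper: both substitute the appropriate scaling into the closed-form distribution function from Theorem \ref{Fastesttimetostationary} and pass to the limit, treating the $\frac{\beta}{n-1}$ correction term in the exponent as a $1+o(1)$ factor. The paper first centers around $\log N$ (working with an auxiliary variable $y$) and then rewrites in terms of $x$ via $\log N = 2\log n - \log 2 + O(1/n)$; you substitute the final scaling $t_n(x)$ directly and factor the exponential. That is a slightly more streamlined presentation of the same calculation.

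For the expectation bound you take a genuinely different route. The paper integrates the tail, $\mathbb{E}(T_s) = \int_0^\infty \mathbb{P}(T_s > t)\,dt$, splits the integral at $\log(N)/\alpha$, bounds the first piece by its length and the second by a tail estimate $1 - (1-u)^N \leq N u$. You instead invoke the exact identity for the maximum of $N$ i.i.d.\ exponentials with common rate $\lambda$, namely $\mathbb{E}(\max_k X_k) = \lambda^{-1}\sum_{k=1}^N 1/k$ (which follows from the spacings of the order statistics being independent exponentials with rates $N\lambda, (N-1)\lambda, \dots, \lambda$), and then bound the harmonic sum by $1 + \log N$. Both are valid; your approach is cleaner and in fact immediately yields the matching lower bound $\mathbb{E}(T_s) = \Theta(\log n)$ with no extra work, whereas the paper's integral-splitting argument, as written, gives only the upper bound. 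The paper's method has the small compensating virtue of not relying on the closed form for the expectation of the maximum, so it would transfer more readily to settings where that formula is unavailable.
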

	We note that for large $n$ and time $t>>\frac{2 \log (n)}{\alpha}$ we have $\mathbb{P}(T_s < t)\approx 1$, and in the time period $[0,t]$ the process is most of that time in stationarity. So in studying certain properties of the dynamic graph one may be able to reduce the problem to study properties of the graph when in stationarity---something that is often more tractable, and is indeed exploited in Section \ref{HittingTimes}.

	\subsection{Hitting times.}
	In Section \ref{HittingTimes} we present a result on the expected time it takes for the graph, starting with $j$ edges, to reach a fixed number $i$ edges, where $j<i$. We give an explicit expression for the expected value of this time, as well as study its asymptotic properties. 
	Special care is given to the case when $j=0$, i.e.\ when the dynamic graph starts without any edges.
	
	Let $\eta(t)$ denote the number of edges at time $t$ in $\{G(t),\ t\geq 0 \}$. Then $\{\eta(t), t\geq 0\}$ is a birth and death process on the non-negative integers $\{0,1,\ldots, N\}$, with birth rates $\lambda_k = (N-k) \beta/(n-1)$ and death rates $\mu_k = k \alpha$. This is also known as an (asymmetric) Ehrenfest urn model, or a dog flea model.
	
	For such processes it is well known that the hitting time of $i$ is distributed as the sum of $i$ independent exponentially distributed random variables, whose parameters are given by the nonzero eigenvalues of the matrix $-Q$, where $Q$ is the generator matrix of the birth and death process $\{\eta(t), t\geq 0\}$, with $i$ turned into an absorbing state. So, $Q$ is given through $Q_{00} = -\lambda_0$, $Q_{01} =\lambda_0$ and for $k = 1,2,\ldots,i-1$:  $Q_{kk} = -(\lambda_k +\mu_k)$,  $Q_{k,k+1} = \lambda_k$ for $k=0,1, \ldots, i-1$ and $Q_{k,k-1} = \mu_k$, while all other elements of $Q$ are 0. (see e.g. \cite[Thm.\ 1.1]{Fill09}).
	
	Because the eigenvalues of a matrix are typically hard to find, we use another approach in deriving the expected hitting time of $i$.
	Still, the obtained expression is difficult to compute for large $n$, see Proposition \ref{hittingtheorem}, and therefore we also give bounds on the expected time it takes for the dynamic graph to go from $0$ to $i=  [c  n]$ edges, where $c$ is a constant and $[x]$ denotes the closest integer to $x$. The main reason this particular scaling is studied is its connection to the size of the largest component. Namely, if $\mathcal{G}(n,i(n))$ is a static Erd\H{o}s-R\'enyi graph with a prescribed number of edges and $|C(n,i(n))|$ is the size of the largest component of such a graph, it is possible to show that
	for every $0 < \epsilon < 1$ there exist a $c_{\epsilon}>1/2$ such that if $i(n) = [c_{\epsilon}  n]$ then, 
	\[
	\frac{|C(n,i(n))|}{n} \xrightarrow[]{p} \epsilon \text{ as } n\to \infty,
	\]
	where $\xrightarrow[]{p}$ denotes convergence in probability. Furthermore,
	\[
	c_{\epsilon} = \frac{- \log (1-\epsilon)}{2  \epsilon}.
	\]
	Hence, for given $\epsilon \in (0,1)$ we know how many edges are needed in the static Erd\H{o}s-R\'enyi graph for the fraction of vertices in  the largest component to be roughly equal to $\epsilon$ with high probability, namely $i = [c_{\epsilon}  n]$ where $c_{\epsilon} = \frac{- \log (1-\epsilon)}{2  \epsilon}$. This can be used for the dynamic graph; if we wait until that many edges are present it is very likely that the size of the largest component in the dynamic graph has already exceeded $\epsilon  n$. This will be discussed further in Section \ref{sizecomponent}.
	
	The expected time to go from $0$ to $i = [c  n]$ exhibits three different behaviors depending on the value of $c$ (note that the expected number of edges grows as $\frac{\beta}{2 \alpha} n$).
	For $c < \frac{\beta}{2 \alpha}$ (i.e.\ for $i$ less than the asymptotic---in time---expected value of $\eta(t)$)  the graph reaches $i$ edges after roughly a constant time; 
	for $c = \frac{\beta}{2 \alpha}$ (i.e.\ for $i$ equal to the asymptotic expected value of $\eta(t)$) the graph reaches $i$ edges after an logarithmic time, which follows from the time to stationarity being $O(\log ( n))$;
	while for $c>\frac{\beta}{2 \alpha}$ (i.e.\ for $i$ greater than the asymptotic expected value of $\eta(t)$) the graph reaches $i$ edges after en exponentially large time. In Section \ref{HittingTimes} we prove the following:
	\begin{theorem}
		\label{mainHittingTimes}
		Let $\tau_j(i)$ be the time it takes, starting with $j$ edges, for the dynamic Erd\H{o}s-R\'enyi graph to reach $i = [c  n]$ edges, where $c>0$. Then for $n \to \infty$,
		\begin{enumerate}
			\item[(a)] If $c < \frac{\beta}{2\alpha}$ then,
			\begin{align*}
				&\tau_0(i) \xrightarrow[]{p} \frac{-\log(1-\frac{2\alpha}{\beta}c)}{\alpha} \text{ as } n \to \infty, \\ 
				&\mathbb{E}(\tau_0(i)) \to  \frac{-\log(1-\frac{2\alpha}{\beta}c)}{\alpha}  \text{ as } n \to \infty.
			\end{align*}
			
			\item[(b)] If $c = \frac{\beta}{2\alpha}$ then for all $j \in \mathbb{N}$,
			\begin{align*}
				\mathbb{E}(\tau_j(i)) = O(\log(n)).
			\end{align*}
			\item[(c)] If $c > \frac{\beta}{2\alpha}$ then
			\begin{align*}
				\Theta(n^{-1}) 
				e^{n\left( c\log(\frac{2\alpha}{\beta}c)-c+\frac{\beta}{2\alpha}\right)}
				\leq \mathbb{E}(\tau_0(i)) \leq \Theta(n^{-1/2}) 
				e^{n\left( c\log(\frac{2\alpha}{\beta}c)-c+\frac{\beta}{2\alpha}\right)}
			\end{align*}
			where $c \log(\frac{2\alpha}{\beta} c)-c+\frac{\beta}{2\alpha}>0$. Furthermore, $\frac{\tau_0(i)}{\mathbb{E}(\tau_0(i))}$ converges in distribution to an exponential random variable with expectation 1.
		\end{enumerate}
	\end{theorem}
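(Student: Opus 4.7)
The plan is to organise the proof around the standard birth-death chain identity
\[
\mathbb{E}_j[\tau_i] = \sum_{k=j}^{i-1} \frac{S_k}{\lambda_k \pi(k)}, \qquad S_k := \sum_{\ell=0}^{k}\pi(\ell),
\]
where $\pi$ is the Binomial$(N,p)$ stationary law of $\eta(t)$, and to extract the three regimes by combining this identity with (a) a fluid-limit/concentration argument, (b) the fastest stationarity time $T_s$ of Corollary~\ref{asymptoticlemma}, and (c) Stirling-type asymptotics for $\pi(i)$.

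For part (a), set $Y_n(t):=\eta(t)/n$ and compute that its drift converges to $\beta/2-\alpha y$. Either Kurtz's theorem or a direct Doob-martingale bound (jump rates of $\eta$ are $O(n)$ with $Y_n$-jumps of size $1/n$) yields convergence in probability of $Y_n$, uniformly on compact intervals, to the solution $y(t)=(\beta/(2\alpha))(1-e^{-\alpha t})$ of $\dot y=\beta/2-\alpha y$, $y(0)=0$. Since $y$ is strictly increasing and crosses $c$ transversally at $t^{\ast}=-\log(1-2\alpha c/\beta)/\alpha$, this gives $\tau_0(i)\xrightarrow{p} t^{\ast}$. To upgrade to convergence in expectation I would establish uniform integrability: the event $\{\tau_0(i)>t^{\ast}+\varepsilon\}$ forces a macroscopic lag of $Y_n$ behind $y$, whose probability is exponentially small in $n$ by a Chernoff bound on the independent Poisson updates in (iia).

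For part (b), I would decouple hitting from mixing using $T_s$. By construction $G(T_s)$ has the stationary (Binomial) law and is independent of $T_s$, and $\mathbb{E}[T_s]=O(\log n)$ by Corollary~\ref{asymptoticlemma}. Since $i=[cn]$ sits within $O(1)$ of the stationary mean $Np$, the CLT for $\text{Bin}(N,p)$ gives $\mathbb{P}(\eta(T_s)\geq i)$ bounded below by a positive constant, and on this event $\tau_j(i)\leq T_s$ by path-continuity of the birth-death process (only $\pm 1$ jumps). Conditional on $\eta(T_s)<i$ the process runs in stationarity, and the fluctuation-diffusion limit of $\eta$ near the mean yields an $O(1)$ additional expected time to reach $i$. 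Combining produces $\mathbb{E}[\tau_j(i)]=O(\log n)$ for fixed $j$.

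Part (c) carries the main work. Applying Stirling to $\pi(i)=\binom{N}{i}p^i(1-p)^{N-i}$ with $N=n(n-1)/2$, $p=\beta/(\beta+(n-1)\alpha)$ and $i=[cn]$, and collecting the leading $cn\log(2\alpha c/\beta)$, $-cn$ and $-n\beta/(2\alpha)$ contributions, gives $\pi(i)=\Theta(n^{-1/2})\exp(-nI(c))$ with $I(c):=c\log(2\alpha c/\beta)-c+\beta/(2\alpha)$; positivity of $I(c)$ for $c>\beta/(2\alpha)$ follows from $I(\beta/(2\alpha))=I'(\beta/(2\alpha))=0$ together with $I''>0$. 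By detailed balance $\lambda_{i-1}\pi(i-1)=i\alpha\pi(i)=\Theta(\sqrt{n})\exp(-nI(c))$, so keeping only the $k=i-1$ term of the identity (with $S_{i-1}\to 1$) already yields the announced $\Theta(n^{-1})e^{nI(c)}$ lower bound. For the upper bound I split the sum at $k_0=[Np]$: for $k\geq k_0$, $\pi(k+1)/\pi(k)$ is bounded away from $1$, so $\sum_{k\geq k_0}1/(\lambda_k\pi(k))$ is $O(1)$ times its largest term at $k=i-1$ and contributes $\Theta(n^{-1/2})e^{nI(c)}$; for $k<k_0$ the terms are polynomially bounded in $n$, and their $O(n)$-many contributions are absorbed into the leading exponential. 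The distributional limit $\tau_0(i)/\mathbb{E}\tau_0(i)\xrightarrow{d}\mathrm{Exp}(1)$ I would obtain by the classical rare-event excursion argument: from any state below $Np$ the process returns to the bulk in polynomial time (by the fluid limit), and each independent excursion reaches $i$ with probability $\Theta(e^{-nI(c)})$, so the geometric number of attempts rescaled by its mean converges to $\mathrm{Exp}(1)$. The main obstacle will be quantifying the geometric-ratio upper bound uniformly in $n$ and $k$ and cleanly decoupling the excursions so that the classical exponential-hitting-time limit applies.
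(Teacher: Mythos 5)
Your part (a) is essentially the paper's argument (fluid limit via moment generating functions in the paper, Kurtz/martingale for you; the upgrade to $L^1$-convergence is handled differently but both are sound). Part (c) does take a genuinely different route: you attack the explicit birth--death identity $\mathbb{E}_0[\tau_i]=\sum_k S_k/(\lambda_k\pi(k))$ directly with Stirling asymptotics for the binomial tail, whereas the paper isolates the regenerative cycle $C_{i\to s\to i}$ (with $s=[\tfrac{\beta}{2\alpha}n]$), applies renewal theory to write $\mathbb{E}(C_{i\to s\to i}) = \mathbb{E}(T_{\geq i})/\mathbb{P}(\eta(T_s)\geq i)$, and then bounds the two factors separately using Ash's binomial tail inequalities and a supermartingale estimate for $\mathbb{E}(T_{\geq i})=\Theta(n^{-1})$. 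Your route is more elementary and self-contained; the paper's route automatically produces the regeneration structure needed for the exponential limit law. However, there are two concrete gaps in your version.

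First, in part (c) the step ``for $k\geq k_0$, $\pi(k+1)/\pi(k)$ is bounded away from $1$'' is false. The ratio is $\frac{N-k}{k+1}\cdot\frac{p}{1-p}$, which equals $1+o(1)$ at $k=k_0=[Np]$ and only becomes bounded away from $1$ once $k-k_0=\Theta(n)$. So the geometric-decay bound on $\sum_{k\geq k_0}1/(\lambda_k\pi(k))$ does not apply uniformly from $k_0$ on; you need a secondary cut, say at $k_0+\delta n$, treating $[k_0,k_0+\delta n]$ by the crude bound (polynomially many terms, each exponentially smaller than the top because $I$ is strictly increasing on $(\beta/2\alpha,\infty)$) and using the geometric ratio only on $[k_0+\delta n, i-1]$. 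Relatedly, the $k=i-1$ term actually evaluates to $\Theta(n^{-1/2})e^{nI(c)}$, not the $\Theta(n^{-1})e^{nI(c)}$ you state; it still implies the theorem's lower bound, but as written your arithmetic is inconsistent with your own Stirling estimate $\pi(i)=\Theta(n^{-1/2})e^{-nI(c)}$ and $\lambda_{i-1}=\Theta(n)$.

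Second, in part (b) the claim that conditional on $\eta(T_s)<i$ ``the fluctuation-diffusion limit of $\eta$ near the mean yields an $O(1)$ additional expected time to reach $i$'' does not hold and, even informally, is circular: the conditional law of $\eta(T_s)$ given $\eta(T_s)<i$ has support down to $0$, and from there the expected time to reach $i=[\tfrac{\beta}{2\alpha}n]$ is precisely the $O(\log n)$ quantity you are trying to bound. The paper circumvents this with the self-referential recursion
\[
\mathbb{E}(\tau_0(i))\leq \mathbb{E}(T_s)+\mathbb{P}(\eta(T_s)\leq i)\,\mathbb{E}(\tau_0(i)),
\]
together with the CLT bound $\mathbb{P}(\eta(T_s)\leq i)\leq 2/3$ for large $n$, which immediately gives $\mathbb{E}(\tau_0(i))\leq 3\,\mathbb{E}(T_s)=O(\log n)$ without any hitting-time estimate near the mean. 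You should replace the diffusion heuristic with this recursion (or an equivalent fixed-point argument) to make part (b) rigorous. The excursion-decoupling sketch for the Exp$(1)$ limit in part (c) is in the same spirit as the paper's construction of independent refresh epochs $T_i$ and the geometric sandwich for $J=\min\{i:A_i\}$, but you correctly flag that making the approximate independence quantitative is the hard part; the paper does this by comparing $\mathbb{P}(A_i\mid A_{j}^c,\,j<i)$ with $\mathbb{P}(A_i)$ and bounding $\mathbb{P}(A_i\mid A_{i-1})=o(1)$ via a return-to-$s$ argument, which would need to be reproduced.
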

	
	Theorem \ref{mainHittingTimes} may be used to provide bounds for the expected time the dynamic Erd\H{o}s-R\'enyi graph needs to first contain a component of a desired size. In particular, in the critical case ($\alpha = \beta$)---for which the typical size of the largest cluster is $o(n)$---we can find an upper bound for the expected time until a positive fraction (say $\epsilon$, where $\epsilon$ is small enough) of the vertices is in one connected component:
	
	\begin{corollary}
		\label{compsizecor}
		Let $\hat{\tau}(\epsilon n)$ be the first time  the critical dynamic Erd\H{o}s-R\'enyi graph, starting with no edges, has a component of size at least $\epsilon n$. Then for all  $\hat{\epsilon} >\epsilon$,
		\[
		\mathbb{E}[\hat{\tau}(\epsilon  n)] = O(n^{-1/2})e^{n(\hat{\epsilon}^2/16+O_{\hat{\epsilon}}(\hat{\epsilon}^3))}.
		\]
	\end{corollary}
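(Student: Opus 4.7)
The plan is to reduce the problem to Theorem~\ref{mainHittingTimes}(c). Set $c_{\hat\epsilon}=-\log(1-\hat\epsilon)/(2\hat\epsilon)$ and $i=[c_{\hat\epsilon}n]$, which is the edge count for which the static graph $\mathcal{G}(n,i)$ has its largest component of size $(1+o(1))\hat\epsilon n$ in probability, as recalled in Section~\ref{HittingTimes}. Since $\hat\epsilon>\epsilon$, this implies $\mathbb{P}(|C(n,i)|\geq \epsilon n)\to 1$. The key observation is that the $N$ edge processes are i.i.d.\ and $\tau_0(i)$ depends only on the total edge count, so by exchangeability $G(\tau_0(i))$ is distributed as $\mathcal{G}(n,i)$. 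Hence the event $A=\{G(\tau_0(i))\text{ contains a component of size at least }\epsilon n\}$ satisfies $\mathbb{P}(A)\to 1$, and on $A$ we have $\hat\tau(\epsilon n)\leq \tau_0(i)$.

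To control the contribution of $A^c$ I would use the natural monotone coupling of the dynamic Erd\H{o}s-R\'enyi graph: running two copies with common Poisson update times and common Bernoulli coin flips in (iia), we have $G'(t)\subseteq G(t)$ for all $t\geq 0$ whenever $G'(0)\subseteq G(0)$. Because ``having a component of size $\geq \epsilon n$'' is monotone increasing, the empty graph is the worst start, so $\mathbb{E}_\sigma[\hat\tau(\epsilon n)]\leq \mathbb{E}[\hat\tau(\epsilon n)]$ for every configuration $\sigma$. Combining this with the strong Markov property at $\tau_0(i)$ yields
\[
\mathbb{E}[\hat\tau(\epsilon n)]\leq \mathbb{E}[\tau_0(i)]+\mathbb{E}\bigl[1_{A^c}\,\mathbb{E}_{G(\tau_0(i))}[\hat\tau(\epsilon n)]\bigr]\leq \mathbb{E}[\tau_0(i)]+\mathbb{P}(A^c)\,\mathbb{E}[\hat\tau(\epsilon n)],
\]
which rearranges to $\mathbb{E}[\hat\tau(\epsilon n)]\leq (1-\mathbb{P}(A^c))^{-1}\mathbb{E}[\tau_0(i)]=(1+o(1))\mathbb{E}[\tau_0(i)]$.

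Finally, I apply Theorem~\ref{mainHittingTimes}(c) with $\alpha=\beta$ and $c=c_{\hat\epsilon}$ to get $\mathbb{E}[\tau_0(i)]\leq O(n^{-1/2})e^{n(c\log(2c)-c+1/2)}$. Expanding $-\log(1-\hat\epsilon)=\hat\epsilon+\hat\epsilon^2/2+\hat\epsilon^3/3+\cdots$ yields $c=\tfrac12+\tfrac{\hat\epsilon}{4}+\tfrac{\hat\epsilon^2}{6}+O(\hat\epsilon^3)$, and a direct bookkeeping calculation shows $c\log(2c)-c+\tfrac12=\hat\epsilon^2/16+O_{\hat\epsilon}(\hat\epsilon^3)$; substituting gives the stated bound.

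The main obstacle is the second paragraph. Without monotonicity one cannot immediately control $\mathbb{E}[\hat\tau(\epsilon n)]$ on $A^c$ by $\mathbb{E}[\tau_0(i)]$, because on that event the dynamic graph at $\tau_0(i)$ is conditioned to a small set of configurations with $i$ edges but no large component, and from such a configuration the expected remaining hitting time could a priori be exponentially large. The combination of the exchangeability---which gives $G(\tau_0(i))\stackrel{d}{=}\mathcal{G}(n,i)$---and the monotone coupling---which shows the empty start is the worst case---keeps the multiplicative overhead to a harmless $1+o(1)$. The Taylor expansion in the third step is routine.
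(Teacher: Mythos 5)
Your proof is correct and takes essentially the same approach as the paper: the paper factors the argument through Lemma~\ref{connectionLemma} (which establishes exactly your exchangeability claim $G(\tau_0(i)) \stackrel{d}{=}\mathcal{G}(n,i)$ and the resulting bound $\mathbb{E}[\hat{\tau}(\epsilon n)] = O(\mathbb{E}[\tau_0([c'n])])$), and then applies Theorem~\ref{mainHittingTimes}(c) with $\alpha=\beta$ followed by the same Taylor expansion to $\hat\epsilon^2/16 + O(\hat\epsilon^3)$. The one point where you are more explicit than the paper is the monotone coupling justifying $\mathbb{E}_{\sigma}[\hat{\tau}(\epsilon n)]\leq\mathbb{E}_{\emptyset}[\hat{\tau}(\epsilon n)]$ for the $A^c$ term; the paper's Lemma~\ref{connectionLemma} leaves this step implicit, and your version is the rigorous reading of it.
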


	\begin{remark}
		The bound of Corollary \ref{compsizecor} is obtained by analyzing the time until the critical dynamic Erd\H{o}s-R\'enyi graph contains enough edges to make a large enough component probable. It is also possible that a component of large enough size appears when the number of edges is not quite as high as needed for our argument in the Corollary, but because a lower number of edges actually form an unlikely configuration.
		
		Indeed, while finishing this paper we were made aware of a result by O'Connell \cite{OCon98} which, with some extra work, provides us with sharper bounds on $\mathbb{E}[\hat{\tau}(\epsilon  n)]$:  
		$$\mathbb{E}[\hat{\tau}(\epsilon  n)] \leq  e^{n (\epsilon^3/8+  o_{\epsilon}(\epsilon^3)) + o_n(n) }.$$
		
		In Section \ref{sizecomponent} we prove this bound as well as show that it is indeed sharper than the bound in Corollary \ref{compsizecor}---showing that, in the critical case, a large component is more likely to emerge with fewer edges than is expected for a critical Erd\H{o}s-R\'enyi graph. The reason for studying the critical case of the dynamic graph is because of tractability of formulae. With that said, we still provide Corollary \ref{compsizecor}, because we believe that its proof is insightful in itself.
	\end{remark}


	

	Finally we provide a brief outline of the paper. In Section \ref{FTTS} we prove Theorem \ref{Fastesttimetostationary}---regarding the fastest time to stationarity---as well as study some asymptotic properties of its distribution function. In Section \ref{HittingTimes} we prove Theorem \ref{mainHittingTimes}, as well as give an explicit expression for the expected time it takes for the graph to go from $i$ to $j$ edges. We also discuss how the time until a graph component of given size emerges relates to the hitting time of a appropriately chosen number of edges.

	\section{The fastest time to stationarity}
	\label{FTTS}
	In constructing the fastest time to stationarity for $\{G(t),\ t\geq 0 \}$ we shall find the fastest times $\{T_{u,v}\}$ to stationary for the underlying edge processes $\{\chi_{u,v}(t),\ t\geq 0 \}$, and take $T_s$ to be the maximum of these.
	Waiting until all the edge processes have entered stationarity should ensure that the dynamic graph is in stationarity, since $G(t) = (\chi_{1,2}(t),\ldots, \chi_{n-1,n}(t))$. In order to show that this time to stationarity is indeed the fastest time to stationarity we need the concepts of a \textit{strong stationary time} and of \textit{separation}, as defined in \cite{Fill}.
	\subsection{Separation and strong stationary times}
	\label{sepAndSST} 
	Roughly speaking, a strong stationary time $T$ for a stochastic process $X$ is a stopping time for $X$ with some extra external randomness such that $X(T)$ has the stationary distribution and is independent of $T$.
	In order to define a strong stationary time for a process $X$ one needs the concept of a \textit{randomized stopping time}. 
	
	\begin{definition} ({\cite{Fill}})
		\label{randomizedstoppingtime}
		Let $(\Omega, \mathcal{F},\{ \mathcal{F}_t \}, P)$ be a filtered probability space. Let $\mathcal{F}_{\infty}$ be the smallest $\sigma$-algebra containing $\mathcal{F}_t$ for all $t$.\\
		Furthermore, let $\mathcal{G} \subset \mathcal{F}$ be a sub-$\sigma$-algebra of $\mathcal{F}$ independent of $\mathcal{F}_{\infty}$.  We say that $T:\Omega \to [0,\infty]$ is a randomized stopping time relative to $\{ \mathcal{F}_t,\ t\geq 0 \}$ if for each $t\geq 0$,
		\[
		\{T \leq t\} \in \sigma (\mathcal{F}_t, \mathcal{G}),
		\]
		where $\sigma (\mathcal{F}_t, \mathcal{G})$ is the smallest $\sigma$-algebra containing both $\mathcal{F}_t$ and $\mathcal{G}$.\\
		If the process $X$ is adapted to $\{ \mathcal{F}_t,\ t\geq 0 \}$ we say that $T$ is a randomized stopping time for $X$.
	\end{definition}
	
	We are now ready to define the strong stationary time and the fastest time to stationarity.
	\begin{definition}({\cite{Fill}})
		\label{strongstationarytime}
		Let $X$ be a stochastic process, defined on a filtered probability space $(\Omega, \mathcal{F},\{ \mathcal{F}_t,\ t\geq 0 \}, P)$ and adapted to $\{ \mathcal{F}_t,\ t\geq 0 \}$, taking values in some state space $S$. Assume that $X$ has a unique stationary distribution $\pi$.
		Furthermore let $T$ be a randomized stopping time relative to $\{ \mathcal{F}_t,\ t\geq 0 \}$. Then, $T$ is said to be a \textit{strong stationary time} for $X$ if: $X(T)$ has the stationary distribution and is independent of $T$ given that $\{T < \infty\}$, i.e. if,
		\begin{multline*}
			\mathbb{P}(T\leq t, X(T) = y|T<\infty) = \mathbb{P}(T\leq t|T<\infty) \mathbb{P}(X(T)=y|T<\infty)\\ = \mathbb{P}(T\leq t|T<\infty) \pi(y)
		\end{multline*}
		for all $0\leq t <\infty$ and $y \in S$.
		
		If, for any other strong stationary time $T'$, we have
		$
		\mathbb{P}(T>t)\leq \mathbb{P}(T'>t)
		$
		then we say that $T$ is the \textit{fastest time to stationarity.}
	\end{definition}
	\begin{remark}
		We shall only be concerned with strong stationary times $T$ such that $\mathbb{P}(T<\infty) = 1$, hence we can drop the conditioning on $\{T < \infty\}$ in Definition \ref{strongstationarytime} above.
	\end{remark}

	The separation, $s(t) = \sup_{y} \left( 1-\frac{\mathbb{P}(X(t) = y)}{\pi(y)}\right) $, for a stochastic process is a function in time which measures the ``distance'' between the distribution at time $t$ and its stationary distribution, and has an intimate connection with strong stationary times.
	
	Strong stationary times are well-understood for ergodic Markov processes on countable state spaces, see \cite{Fill}. 
	The main result of \cite{Fill} is that for ergodic Markov processes on countable state spaces the following holds,
	
	\begin{enumerate}[(I)]
		\item If $T$ is a strong stationary time, then for all $0\leq t <\infty$, 
		\begin{equation}
			s(t) \leq \mathbb{P}(T>t) \label{separationeq}
		\end{equation}
		i.e. the separation at time $t$ is a lower bound for the probability that the process has not yet entered stationarity at time $t$.
		\item If the state space of the process is finite (and the underlying probability space rich enough to support an uniformly distributed random variable on $(0,1)$ independent of the process), there exist a strong stationary time $T$ such that \eqref{separationeq} holds with equality. We call such a time  \textit{the fastest time to stationarity.}\label{separationequality}
	\end{enumerate}
	\begin{remark}
		It follows that, if the distribution of the fastest time to stationarity is known, then equation \eqref{separationeq} gives a way of quantifying the rate of convergence of the dynamic graph to stationarity, since the separation measures the distance between the distribution of a process at time $t$ and its stationary distribution. \\
		As stated before, we assume that the underlying probability space is rich enough to support (iia) in the definition of $\{G(t),\ t\geq 0 \}$---for the purpose of finding the distribution of the fastest time to stationarity this assumption can be made without loss of generality, since if such a time exists on the probability space its distribution is determined by (II).
	\end{remark}
	
	Fill also gives the algorithm for constructing the fastest time to stationarity for an ergodic Markov chain on a finite state space. 
	However, the proof of this is technical and not very intuitive. Nevertheless, we can still use the above mentioned results from \cite{Fill} to show that our candidate time to stationarity for $\{G(t),\ t\geq 0 \}$ is indeed the stochastically smallest one. 
	
	\subsection{The fastest time to stationarity}
	In order to construct the fastest time to stationarity for the dynamic graph we proceed by constructing such times for the underlying edge processes.
	
	\begin{lemma}
		\label{edgetimes}
		Let $\{\chi_{u,v}(t),\ t\geq 0 \}$ be an edge process starting with $0$ or $1$ edges. Then the fastest time to stationarity $T_{u,v}$ for $\{\chi_{u,v}(t),\ t\geq 0 \}$ is distributed as,
		\[
		T_{u,v} \sim \text{Exp$\left( \alpha+\frac{\beta}{n-1}\right)$ }.
		\]
	\end{lemma}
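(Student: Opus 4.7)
The plan is to exhibit a concrete candidate for $T_{u,v}$, verify it is a strong stationary time with the claimed exponential distribution, and then use property (I) from Section \ref{sepAndSST} (the separation lower bound) to show it is the stochastically smallest such time.

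First I would take $T_{u,v}$ to be the first update time of the edge process $\{\chi_{u,v}(t),\ t\geq 0\}$ under the construction (iia). By definition (iia) this time is $\mathrm{Exp}(\alpha+\beta/(n-1))$-distributed and at that update the edge state is re-sampled from $\pi$ independently of everything that came before. Consequently $\chi_{u,v}(T_{u,v})\sim\pi$ and is independent of $T_{u,v}$; moreover, by the Markov property, $\chi_{u,v}$ remains in stationarity after that time. This shows that $T_{u,v}$ is a strong stationary time with the claimed distribution.

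Next I would compute the separation $s(t)=\sup_{y\in\{0,1\}}\bigl(1-\mathbb{P}(\chi_{u,v}(t)=y)/\pi(y)\bigr)$ directly from the explicit transition probabilities in Lemma \ref{edgelemma}. Writing $\lambda=\alpha+\beta/(n-1)$ and assuming $\chi_{u,v}(0)=0$, one finds $p_{0,1}(t)/\pi(1)=1-e^{-\lambda t}$, while $p_{0,0}(t)/\pi(0)=1+(\pi(1)/\pi(0))e^{-\lambda t}\geq 1$. Hence the supremum in the definition of $s(t)$ is attained at $y=1$ and equals $s(t)=e^{-\lambda t}$. The case $\chi_{u,v}(0)=1$ is entirely symmetric and yields the same value $s(t)=e^{-\lambda t}$.

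Finally, property (I) from Section \ref{sepAndSST} states that \emph{every} strong stationary time $T$ for $\{\chi_{u,v}(t),\ t\geq 0\}$ satisfies $s(t)\leq \mathbb{P}(T>t)$ for all $t\geq 0$. Since our candidate satisfies $\mathbb{P}(T_{u,v}>t)=e^{-\lambda t}=s(t)$, it attains this lower bound with equality, so no other strong stationary time can be stochastically smaller. This verifies that $T_{u,v}\sim\mathrm{Exp}(\alpha+\beta/(n-1))$ is the fastest time to stationarity, as claimed. The only step requiring any care is the separation computation; the rest is immediate from (iia) and from Fill's inequality. The main conceptual point is simply that for a two-state on-off chain the first re-sampling update is already an optimal strong stationary time, because re-sampling is the most economical way to erase the entire history of the chain.
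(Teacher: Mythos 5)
Your proposal is correct and follows essentially the same path as the paper: take the first resampling update as the candidate, verify it is a strong stationary time using (iia), compute the separation $s(t)=e^{-\lambda t}$ from the explicit transition probabilities, and conclude optimality from Fill's separation inequality. The only cosmetic difference is that you invoke property (I) for the final optimality step while the paper cites (II); your phrasing is the logically tighter one, but it is the same argument.
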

	\begin{proof}
		After the time to the first update $T_{u,v}$ of an edge---which is an exponential distributed time with rate $(\alpha + \frac{\beta}{n-1})$---the edge process is in stationarity, since 
		$\mathbb{P}(\chi_{u,v}(T_{u,v}+s)=1)=\mathbb{P}(\text{edge added at last update}) = p$. It is also clear that $T_{u,v}$ is a (randomized) stopping time relative to the filtration which $\{\chi_{u,v}(t),\ t\geq 0 \}$ is adapted to (information generated by update times and corresponding edge updates).
		
		Also, $T_{u,v}$ satisfies,
		\begin{multline*}
			\mathbb{P}(T_{u,v}\leq t, \chi_{u,v}(T_{u,v}) = 1) = \mathbb{P}(T_{u,v}\leq t,\text{edge added last update}) \\
			= \mathbb{P}(T_{u,v}\leq t) \mathbb{P}(\text{edge added last update}) 
			= \mathbb{P}(T_{u,v}\leq t) p.
		\end{multline*}
		By Definition \ref{strongstationarytime}, $T_{u,v}$ is a strong stationary time for $\{\chi_{u,v}(t),\ t\geq 0 \}$. 
		
		Furthermore, it is easily shown that $\mathbb{P}(T_{u,v}>t)$ equals the separation\[s(t) = \sup_{i \in \{0,1\}}\left( 1-\frac{\mathbb{P}(\chi_{u,v}(t)=i)}{\pi (i)}\right) \] of the process $\{\chi_{u,v}(t),\ t\geq 0 \}$ since,
		\begin{align*}
			& \chi_{u,v}(0)=0 \implies s(t) = \left(1-\frac{p_{0,1}(t)}{\pi(1)}\right) = \mathbb{P}(T_{u,v}>t),\\
			& \chi_{u,v}(0)=1 \implies s(t) = \left(1-\frac{p_{1,0}(t)}{\pi(0)}\right) = \mathbb{P}(T_{u,v}>t).
		\end{align*}
		Hence, by (II), $T_{u,v}$ is the fastest time to stationarity for  $\{\chi_{u,v}(t),\ t\geq 0 \}$, if the process starts with $0$ or $1$ edges.
	\end{proof}
	\begin{remark}
		\label{remstat}
		If the initial distribution of $\{\chi_{u,v}(t),\ t\geq 0 \}$ is arbitrary then the time in Lemma \ref{edgetimes} is still a strong stationary time for the process---however it need not be the fastest one: clearly, if the initial distribution is the stationary distribution then the fastest time to stationarity is $T = 0$.
	\end{remark}
	
	We are now ready to prove Theorem \ref{Fastesttimetostationary}.
	\begin{proof}[Proof of Theorem \ref{Fastesttimetostationary}]
		Recall that $G(t) = (\chi_{1,2}(t),\dots, \chi_{n-1,n}(t))$. \\
		Assume $\{G(t),\ t\geq 0 \}$ starts in a given state $\sigma$ in the state space $S$---i.e. $G(0)$ has probability mass 1 on $\sigma$. By Lemma \ref{edgetimes}, $T_{u,v} \sim $ Exp($\alpha + \frac{\beta}{n-1}$) is the fastest time to stationary for $\{\chi_{u,v},\ t\geq 0 \}$ whether we start with or without an edge. 
		It is clear from the defining property (iia) that $T_s = \max(T_{1,2},\dots,T_{n-1,n})$ is a strong stationarity time for $\{G(t),\ t\geq 0 \}$.
		
		It remains to show then is that $P(T_s > t) = s(t)$ where $s(t)$ is the separation for $\{G(t),\ t\geq 0 \}$ starting in state $\sigma$. Note that $P(T_s > t) = s(t)$ is equivalent to $P(T_s < t) = 1-s(t) = a(t) = \inf_{x \in S} \frac{\mathbb{P}( G(t)=x|G(0)=\sigma  )}{\pi(x)}$, which is easier to prove ($\pi$ being the stationary distribution of $\{G(t),\ t\geq 0 \}$). Let $a_{u,v}(t) = 1-s_{u,v}(t)$ where $s_{u,v}(t)$ is the separation for the edge process $\{\chi_{u,v}(t),\ t\geq 0 \}$ starting in state $0$ or $1$ (the separation is the same for both states). As the edge processes are independent we have that $a(t) = a_{u,v}(t)^N$. 
		Recall that since $T_{u,v}$ is the fastest time to stationarity for $\{\chi_{u,v},\ t\geq 0 \}$ we have that $P(T_{u,v}<t)=1-s_{u,v}(t)=a_{u,v}(t)$. We get,
		\begin{align*}
			a(t) = a_{u,v}(t)^N = (1-s_{u,v}(t))^N= \mathbb{P}(T_{u,v}<t)^N=\left( 1 -  e^{-(\alpha+\frac{\beta}{n-1}) t}\right)  ^{N}
		\end{align*}
		Since $\mathbb{P}(T_s \leq t) = \mathbb{P}( \max(T_{1,2},\dots,T_{n-1,n}) \leq t )
		= \mathbb{P}(T_{u,v} \leq t)^{N}$ the assertion follows.
	\end{proof}
	\begin{remark}
		If the initial distribution of $\{G(t),\ t\geq 0 \}$ is arbitrary---not a fixed number of edges---then the time in Lemma \ref{Fastesttimetostationary} is still a strong stationary time for the process, however it need not be the fastest one (see Remark \ref{remstat}). 
	\end{remark}

	\subsection{Asymptotics}
	The distribution for the fastest time to stationarity in Theorem \ref{Fastesttimetostationary} is exact but not very insightful. 
	Here we provide the proof of Corollary \ref{asymptoticlemma}, in which we deal with the asymptotic of the distribution function. The corollary follows from standard results in extreme value theory, but since the proof is not difficult we provide it here for reasons of completeness.

	\begin{proof}[Proof of Corollary \ref{asymptoticlemma}]
		Let all limits be for $n \to \infty$. Recall for $a \in \mathbb{R}$, the standard limit $(1-a/n)^n \to e^{-a}$ (and therefore $(1-(af(n))/n)^n \to e^{-a}$, if $f(n) \to 1$).
		In particular using Theorem \ref{Fastesttimetostationary},
		\begin{multline*}	
			\mathbb{P}\left(T_s  \leq  \frac{\log N  +y}{\alpha}\right)
			= \left( 1 -  e^{-(\alpha+\frac{\beta}{n-1})  \frac{\log N  +y}{\alpha}}\right)^{N} \\
			=  \left( 1 - \frac{e^{-y - \frac{\beta}{n-1}\frac{\log N  +y}{\alpha}}}{N}\right)^{N} \to e^{-e^{-y}}, 
		\end{multline*}
		since $\frac{\beta}{n-1}\frac{\log N  +y}{\alpha} \to 0$ and $N \to \infty$ as $n\to \infty$.
		Observing that $$\log N = 2 \log(n) + \log(1-1/n) - \log(2)$$ and replacing $y$ by $x + \log(1-1/n) = x + O(1/n)$ shows that 
		$$\mathbb{P}\left(\alpha T_s - 2 \log(n) + \log 2  \leq  x\right) \to e^{-e^{-x}},$$ as desired.
		
		To prove that $\mathbb{E}(T_s) = O(\log (n))$ we note that,
		$$
		1-(1-e^{-(\alpha+ \frac{\beta}{n-1} ) t })^N \leq  \min \{1,N  e^{ -(\alpha+ \frac{\beta}{n-1} ) t  } \}\qquad \text{and} \qquad \log(N)  \leq 2\log(n).
		$$
		Hence,
		\begin{multline*}
			\mathbb{E}(T_s) = \int_0^{\infty} \mathbb{P}(T_s > t) dt =
			\int_0^{\infty} \left(1- \left( 1 -  e^{-(\alpha+\frac{\beta}{n-1}) 
				t}\right)  ^{N} \right) dt\\
			= \int_0^{\log(N)/\alpha} \left(1- \left( 1 - 
			e^{-(\alpha+\frac{\beta}{n-1}) t}\right) ^{N} \right) dt +
			\int_{\log(N)/\alpha}^{\infty} \left(1- \left( 1 - 
			e^{-(\alpha+\frac{\beta}{n-1}) t}\right) ^{N} \right) dt\\
			\leq \frac{\log(N)}{\alpha} + \int_{\log(N)/\alpha}^{\infty} N 
			e^{-(\alpha+\frac{\beta}{n-1}) t} dt \leq 2\frac{\log(n)}{\alpha}  + 
			\left(\alpha+\frac{\beta}{n-1}\right)^{-1} = 
			O(\log(n)).
		\end{multline*}
		This completes the proof of the corollary.	
	\end{proof}

	\section{Hitting times for a fixed number of edges}
	\label{HittingTimes}
	In this section we study $\mathbb{E}(\tau_j(i))$, where $\tau_j(i)$ is the time it takes for the dynamic graph to reach $i$ edges, given that it starts with $j$ edges, i.e.
	$$\tau_j(i) = \inf \{t>0;\ \eta(t)=i,\ \eta(0)=j \}.$$ We derive an exact expression for $\mathbb{E}(\tau_j(i))$, $(j<i)$, and give special attention to the case $\mathbb{E}(\tau_0(i))$, when $i=[c  n]$ and $c>0$, where we provide (asymptotic) bounds for $\mathbb{E}(\tau_0(i))$.
	
	As mentioned in the introduction, it is known (see e.g.\ \cite{Fill09}), that $\tau_0(i)$ is distributed as the sum of $i$ independent exponentially distributed random variables with as rate parameters, the nonzero eigenvalues of the negative generator matrix of the variant of the process $\{\eta(t), t \geq 0\}$ restricted to states $\{0, 1, \ldots, i\}$ in which $i$ is turned into an absorbing state. However, finding those eigenvalues is difficult, and therefore we put effort in finding expressions for the expected hitting time of $i$ and asymptotics for it.
	
	In deriving an exact expression for $\mathbb{E}(\tau_j(i))$ we shall need to exploit the strong Markov property of the dynamic graph. For our purposes we say that a Markov process $X$ has the strong Markov property if for any $\textit{a.s. finite}$ stopping time $\tau$ for $X$ we have that $X_{\tau} = \{X(t+\tau),\ t\geq 0 \}$ is a probabilistic copy of $X$ starting in $X(\tau)$, as well as being independent of $X$ up to time $\tau$, given $X(\tau)$.
	
	First we compute $\mathbb{E}(\tau_k(k+1))$ for $k<N$. Then, by the strong Markov property of $\{\eta(t),\ t\geq 0 \}$,
	\begin{equation}
		\label{StrongMarkovProperty}
		\mathbb{E}(\tau_j(i)) = \sum_{k=j}^{i-1} \mathbb{E}(\tau_k(k+1)).
	\end{equation}
	This leads us to the following proposition.
	\begin{proposition}
		\label{hittingtheorem}
		Let $\tau_j(i)$ be the time it takes for the dynamic Erd\H{o}s-R\'enyi graph, starting with $j$ edges, to reach $i$ edges, where $j<i$. Then,
		\begin{equation}
			\label{eq1}
			\mathbb{E}(\tau_i(i+1)) = \frac{(n-1) (N-i-1)!i!}{\beta  N!} \sum_{k=0}^{i} \binom{N}{i-k}\left(\frac{\alpha}{\beta}(n-1)\right)^k
		\end{equation}
		and 
		\begin{equation}
			\label{eq3}
			\mathbb{E}(\tau_j(i)) =  \sum_{m=j}^{i-1} \frac{(n-1) (N-m-1)!m!}{\beta N!}  \sum_{k=0}^{m} \binom{N}{m-k}\left(\frac{\alpha}{\beta}(n-1)\right)^k.
		\end{equation}
		
	\end{proposition}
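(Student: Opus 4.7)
The plan is to prove \eqref{eq1} by first-step analysis on the birth-death chain $\{\eta(t),\ t\geq 0\}$ and to deduce \eqref{eq3} directly from \eqref{StrongMarkovProperty}. Set $h_k:=\mathbb{E}(\tau_k(k+1))$ for $0\leq k<N$. From any state $k\geq 1$ the first transition happens after an $\mathrm{Exp}(\lambda_k+\mu_k)$ holding time and is downward with probability $\mu_k/(\lambda_k+\mu_k)$; if the jump is downward, the strong Markov property (applicable since the hitting times involved are a.s.\ finite by the ergodicity of $\{\eta(t)\}$ given in Lemma \ref{edgelemma}(b)) gives that the remaining expected time to first reach $k+1$ equals $h_{k-1}+h_k$. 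Conditioning on the first jump produces
\[
h_k=\frac{1}{\lambda_k+\mu_k}+\frac{\mu_k}{\lambda_k+\mu_k}(h_{k-1}+h_k),
\]
equivalently $\lambda_k h_k=1+\mu_k h_{k-1}$, with boundary $h_0=1/\lambda_0$.

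Iterating this telescopes to the closed form
\[
h_k=\sum_{j=0}^{k}\frac{\mu_k\mu_{k-1}\cdots\mu_{k-j+1}}{\lambda_k\lambda_{k-1}\cdots\lambda_{k-j}},
\]
where the $j=0$ summand has an empty numerator product equal to $1$. Substituting $\mu_\ell=\ell\alpha$ and $\lambda_\ell=(N-\ell)\beta/(n-1)$ one computes
\[
\prod_{\ell=k-j+1}^{k}\mu_\ell=\alpha^j\,\frac{k!}{(k-j)!},\qquad\prod_{\ell=k-j}^{k}\lambda_\ell=\left(\frac{\beta}{n-1}\right)^{j+1}\frac{(N-k+j)!}{(N-k-1)!}.
\]
Plugging in, extracting the $j$-independent factor $(n-1)(N-k-1)!\,k!/\beta$ out of the sum, and multiplying and dividing by $N!$ yields
\[
h_k=\frac{(n-1)\,(N-k-1)!\,k!}{\beta\,N!}\sum_{j=0}^{k}\binom{N}{k-j}\left(\frac{\alpha(n-1)}{\beta}\right)^{j},
\]
which, after renaming $k\to i$ and the summation variable $j\to k$, is exactly \eqref{eq1}. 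Formula \eqref{eq3} then follows by summing this identity for $k=j,j+1,\ldots,i-1$ and invoking \eqref{StrongMarkovProperty}.

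There is no serious obstacle: the argument is a standard first-step analysis plus manipulation of factorials, and the strong Markov property has already been invoked by the authors to justify \eqref{StrongMarkovProperty}. The only point requiring genuine care is the algebraic bookkeeping that rewrites $1/[(k-j)!(N-k+j)!]$ as $\binom{N}{k-j}/N!$, so that the binomial coefficient appears in precisely the form stated in \eqref{eq1}.
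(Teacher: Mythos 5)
Your proof is correct and follows essentially the same route as the paper: first-step analysis gives the identical one-step recursion $\lambda_k h_k = 1 + \mu_k h_{k-1}$ (the paper writes it in the equivalent form $\mathbb{E}(\tau_i(i+1)) = \bigl(\mathbb{E}(H_i)+p(i,i-1)\mathbb{E}(\tau_{i-1}(i))\bigr)/p(i,i+1)$), after which you unroll the recursion to derive the closed-form sum directly, whereas the paper instead states the closed form and verifies it by induction. The two arguments are mathematically equivalent; your derivational version is arguably cleaner since it makes explicit the ``tedious algebra'' the paper elides.
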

	\begin{proof}
		Recall that $\{\eta (t),\ t\geq 0 \}$ is an ergodic Markov chain on a finite state space, this ensures that the process has the strong Markov property, see \cite[Thm.~4.1]{SMP}. For notational convenience let $\lambda_k = (N-k)\beta/(n-1)$ be the birth rate and $\mu_k = \alpha k$ be the death rate in state $k$ of $\{\eta (t),\ t\geq 0 \}$.
		
		We begin by deriving a recursive formula for $\mathbb{E}(\tau_i(i+1))$. Since $\{\eta (t),\ t\geq 0 \}$ is ergodic and therefore positively recurrent we have that $\mathbb{E}(\tau_i(i+1))<\infty$.
		We derive a recursive formula for $\mathbb{E}(\tau_i(i+1))$ by conditioning on the first jump. Let $p(i,i+1) = \frac{\lambda_i}{\lambda_i +\mu_i}$ be the probability that the process moves from $i$ edges to $i+1$ edges, and let $ i \to (i+1) $ indicate such an event. Define $p(i,i-1) = \frac{\mu_i}{\lambda_i +\mu_i}$ and  $i \to (i-1) $ in an analogous way. Also let $H_i\sim$ Exp($\lambda_i+\mu_i$) be the holding time in state $i$.
		Then,
		\begin{align*}
			\mathbb{E}(\tau_i(i+1)) & =  p(i,i+1) \mathbb{E}(\tau_i(i+1)|i \to (i+1)) + p(i,i-1) \mathbb{E}(\tau_i(i+1)|i \to (i-1))\\
			& = p(i,i+1) \mathbb{E}(H_i) + p(i,i-1) \mathbb{E}(\tau_i(i+1)|i \to (i-1))\\
			& \overset{(i)}{=} p(i,i+1) \mathbb{E}(H_i)+p(i,i-1) \mathbb{E}(H_i) + p(i,i-1) \mathbb{E}(\tau_{i-1}(i+1))\\
			& \overset{(ii)}{=} \mathbb{E}(H_i) + p(i,i-1)( \mathbb{E}(\tau_{i-1}(i)) +\mathbb{E}(\tau_{i}(i+1)) ).
		\end{align*}
		For $(i)$ we used that when entering state $i-1$ the process probabilistically restarts itself, this is by the strong Markov property as well as $\tau_i(i-1)$ being a stopping time for $\{\eta (t),\ t\geq 0 \}$ starting in $i$. Equality
		$(ii)$ follows since, 
		\[
		\tau_{i-1}(i+1) = \tau_{i-1}(i)+\tau_{i}'(i+1)
		\]
		where $\tau_{i}'(i+1)$ is the time it takes for the process, starting with $i-1$ edges, to go from $i$ edges (when it eventually reaches $i$ edges) to $i+1$ edges. This is then, again by the strong Markov property, distributed as $\tau_{i}(i+1)$. Hence,
		\[
		\mathbb{E}(\tau_{i-1}(i+1)) = \mathbb{E}(\tau_{i-1}(i))+\mathbb{E}(\tau_{i}(i+1)).
		\]
		
		Solving the above equation for $\mathbb{E}(\tau_{i}(i+1))$ we obtain,
		\begin{equation}
			\label{eq2}
			\mathbb{E}(\tau_i(i+1) )= \frac{\mathbb{E}(H_i)+p(i,i-1) \mathbb{E}(\tau_{i-1}(i))}{p(i,i+1)} ,\qquad i \in \{1,2,\dots, N-1\}.
		\end{equation}
		For $i=0$ we have $\mathbb{E}(\tau_0(1)) = \mathbb{E}(H_0)$.
		
		To prove \eqref{eq1} we use \eqref{eq2} together with induction.
		The following holds for the birth-death process, $\{\eta (t),\ t\geq 0 \}$.
		\begin{align*}
			& \mathbb{E}(H_i) = \frac{1}{\lambda_i+\mu_i} = \frac{ n-1 }{ (N-i)\beta+(n-1) i \alpha  } \\
			& p(i,i-1) = \frac{\mu_i}{\lambda_i+\mu_i} = \frac{ i (n-1)\alpha }{ (N-i)\beta+(n-1) i \alpha } \\
			& p(i,i+1) = \frac{\lambda_i}{\lambda_i+\mu_i} = \frac{ (N-i)\beta }{ (N-i)\beta+(n-1) i \alpha }
		\end{align*}
		Inserting this in \eqref{eq2}, we obtain
		\[
		\mathbb{E}(\tau_i(i+1)) = \frac{n-1}{(N-i)\beta} + \frac{(n-1) i}{N-i} \frac{\alpha}{\beta}  \mathbb{E}(\tau_{i-1}(i)),\qquad i \in \{1,2,\dots, N-1\}.
		\]
		For $i=0$, we have by \eqref{eq1} that
		$
		\mathbb{E}(\tau_0(1)) = \frac{n-1}{\beta  N},
		$
		which is indeed equal to $\mathbb{E}(H_0)$.
		
		Assume that \eqref{eq1} holds for arbitrary $ i < N-1 $. Then,
		\begin{align*}
			& \mathbb{E}(\tau_{i+1}(i+2)) = \frac{\mathbb{E}(H_{i+1}) + p(i+1,i)\mathbb{E}(\tau_i(i+1))}{p(i+1,i+2)}
			\\
			& =\frac{n-1}{(N-i-1)\beta} + \frac{(n-1) (i+1)}{N-i-1} \left(\frac{(n-1)(N-i-1)!i!}{\beta  N!} \sum_{k=0}^{i} \binom{N}{i-k} \left((n-1)\frac{\alpha}{\beta}\right)^k\right)
		\end{align*}
		which after standard, but tedious algebra, equals
		\[
		\frac{(n-1)(N-i-2)!(i+1)!}{\beta  N!} \sum_{k=0}^{i+1} \binom{N}{i+1-k}\left((n-1)\frac{\alpha}{\beta}\right)^k.
		\]
		This proves equation \eqref{eq1}, and \eqref{eq3} follows from \eqref{StrongMarkovProperty}.
		
	\end{proof}
	
	The expression \eqref{eq3} is exact but not very insightful. In order to get a better understanding of how $\mathbb{E}(\tau_j(i))$ behaves, we study how this expectation grows when $i = [c  n]$ and $j=[c' n]$.  
	To do this for the case $c'<c<\frac{\beta}{2\alpha}$, consider $\bar{\eta}(t)= \frac{\eta(t)}{n}$ and note that $\bar{\eta}(t)$ increases by $\frac{1}{n}$ at rate $\beta \frac{n}{2} -\beta \frac{n}{n-1}\bar{\eta}(t)$ and decreases by $\frac{1}{n}$ at rate $\alpha n \bar{\eta}(t)$. This implies that a candidate for a deterministic approximation of $\bar{\eta}(t)$ satisfies 
	$$\frac{d \bar{\eta}(t)}{dt} =  \frac{\beta}{2} -\frac{\beta \bar{\eta}(t)}{n-1} - \alpha \bar{\eta}(t) \qquad \text{and} \qquad \bar{\eta}(0) = \frac{[c'n]}{n}.$$ As $n \to \infty$ this reads 
	$$\frac{d \bar{\eta}(t)}{dt} =  \frac{\beta}{2} - \alpha \bar{\eta}(t)\qquad \text{and} \qquad\bar{\eta}(0) = c'.$$  
	This differential equation is solved by $\bar{\eta}(t) = \frac{\beta}{2 \alpha} (1-e^{-\alpha t})+ c'e^{-\alpha t}$. 
	
	Now we are ready to formulate our next lemma.
	
	\begin{lemma}
		\label{stationaryedgeslow}
		Let $\tau_j(i)$ be the time it takes for the dynamic Erd\H{o}s-R\'enyi graph, starting with $j =[c'n]$ edges, to reach $i = [c   n]$ edges, where either $0 \leq c'<c<\frac{\beta}{2\alpha}$ or 
		$\frac{\beta}{2\alpha}<c<c'$. Then,
		$$\tau_j(i)  \xrightarrow[]{p} \frac{-\log\left(\frac{\beta -2 \alpha   c}{\beta -2 \alpha  c'}\right)}{\alpha} \qquad \text{as $n \to \infty$.} $$
	\end{lemma}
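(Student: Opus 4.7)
The plan is to upgrade the heuristic ODE argument in the paragraph preceding the lemma to a rigorous functional law of large numbers for $\bar\eta_n(t):=\eta(t)/n$, and then read off the hitting time from transversal crossing of the limiting trajectory. The first step is to solve the limiting ODE $\dot f(t)=\beta/2-\alpha f(t)$ with $f(0)=c'$ explicitly, giving $f(t)=\beta/(2\alpha)+(c'-\beta/(2\alpha))e^{-\alpha t}$; setting $f(t^*)=c$ and inverting yields exactly
\begin{equation*}
t^* \;=\; -\frac{1}{\alpha}\log\!\left(\frac{\beta-2\alpha c}{\beta-2\alpha c'}\right),
\end{equation*}
which is the candidate limit in the statement. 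In both regimes of the lemma the argument of the logarithm lies in $(0,1)$, so $t^*>0$, and $f$ crosses the level $c$ transversally at $t^*$ since $f'(t^*)=\beta/2-\alpha c\neq 0$.

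The main technical step is the fluid limit
\begin{equation*}
\sup_{0\le t\le T}\bigl|\bar\eta_n(t)-f(t)\bigr|\xrightarrow[]{p}0,\qquad n\to\infty,
\end{equation*}
for every fixed $T>0$. I would establish this through the standard Doob--Meyer/Gronwall argument for density-dependent jump processes: write $\bar\eta_n(t)=\bar\eta_n(0)+\int_0^t b_n(\bar\eta_n(s))\,ds+M_n(t)$ where the drift $b_n(x)=\beta/2-\beta x/(n-1)-\alpha x$ converges uniformly on bounded $x$-sets to $b(x)=\beta/2-\alpha x$, and $M_n$ is a mean-zero martingale. After localizing to the high-probability event that $\bar\eta_n(s)$ stays bounded on $[0,T]$ (which follows from the crude bound $\mathbb{E}[\eta(s)]\le j+Np=O(n)$ uniformly in $s$), the total jump rate is $O(n)$ while the jumps have size $1/n$, so the predictable quadratic variation of $M_n$ is $O(T/n)$, and Doob's maximal inequality yields $\sup_{t\le T}|M_n(t)|\xrightarrow[]{p}0$. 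Gronwall's inequality applied to $\bar\eta_n-f$ via the Lipschitz bound on $b$ then closes the argument; alternatively one can quote Kurtz's theorem for density-dependent population processes directly.

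With the uniform convergence in hand, the hitting-time conclusion is a soft consequence of transversality. Fix $\varepsilon>0$ and choose $\delta>0$ so that $|f(s)-c|\ge\delta$ on $[0,t^*-\varepsilon]\cup\{t^*+\varepsilon\}$, with $f-c$ keeping a constant sign on $[0,t^*-\varepsilon]$ opposite to that of $f(t^*+\varepsilon)-c$. On the event $\{\sup_{s\le t^*+\varepsilon}|\bar\eta_n(s)-f(s)|<\delta/2\}$, which by the fluid limit has probability tending to $1$, the process $\bar\eta_n$ stays strictly on the initial side of $c$ throughout $[0,t^*-\varepsilon]$ and lies strictly on the opposite side at $t^*+\varepsilon$. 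Since $\eta$ is skip-free and $[cn]/n\to c$, this sandwich forces $t^*-\varepsilon\le\tau_j(i)\le t^*+\varepsilon$ with probability $1-o(1)$, and letting $\varepsilon\downarrow 0$ gives $\tau_j(i)\xrightarrow[]{p}t^*$. I expect the main obstacle to be the fluid-limit step, specifically ensuring that the localization to bounded $\bar\eta_n$ is uniform on $[0,T]$ so that the $n^{-1}$-scaled jumps really dominate the martingale contribution despite total rates that can grow with $n$.
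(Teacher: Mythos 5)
Your proof is correct and takes a genuinely different route from the paper's. The paper does not establish a uniform (functional) law of large numbers for $\bar\eta_n$. It proves only pointwise convergence in distribution of $\bar\eta_n(t)$ for each fixed $t$, by computing the moment generating function of $\eta(t)$ explicitly: because the edge processes are independent, $\eta(t)$ is the sum of two independent binomials (one from the $j$ initially-present edges with success probability $p_1^{(n)}(t)$, one from the $N-j$ initially-absent edges with success probability $p_0^{(n)}(t)$), and the rescaled MGF is shown to converge to that of the constant $f(t)$. Pointwise convergence pins down $\tau_j(i)$ from one side immediately (if $f(t)>c$ then $\mathbb{P}(\eta(t)\geq cn)\to 1$, hence $\mathbb{P}(\max_{s\leq t}\eta(s)\geq cn)\to 1$), but not from the other, since a marginal statement at time $t$ controls nothing about $\max_{s\leq t}\eta(s)$. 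The paper closes this gap with a short drift/coupling observation: below $\frac{\beta}{2\alpha}n$ the process has upward drift, so $\mathbb{P}(\eta(t)\geq cn\mid\eta(s)\geq cn)\geq 1/2$ for $s<t$, yielding $\mathbb{P}(\max_{s\leq t}\eta(s)\geq cn)\leq 2\,\mathbb{P}(\eta(t)\geq cn)\to 0$ when $t<t^*$. Your fluid-limit route avoids this ad hoc max-versus-marginal step entirely: the uniform bound plus transversality of the crossing handles both directions of the sandwich at once. The localization you flag as the main obstacle is in fact painless here, since for each $t$, $\eta(t)$ is a sum of $N$ independent Bernoullis with mean $O(n)$, so a Chernoff bound (combined with a grid and the $O(n)$ jump rate, or simply monotonicity of the generator in $x$) gives $\sup_{t\le T}\bar\eta_n(t)=O(1)$ w.h.p.; alternatively Kurtz's theorem applies directly. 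The trade-off is machinery against elementariness: your route needs the martingale/Gronwall apparatus but gives a cleaner, symmetric hitting-time conclusion, while the paper's uses only explicit MGF computations plus a two-line drift argument, at the cost of treating the two halves of the bound asymmetrically.
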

	\begin{proof}
		Let all limits be for $n \to \infty$.
		In this proof we use $\{\bar{\eta}^{(n)}(t), t\geq 0\}$ and $\{\eta^{(n)}(t), t\geq 0\}$ to denote the dependence on the number of vertices in the graph. 
		
		We prove that $\{\bar{\eta}^{(n)}(t), t\geq 0\}$ converges pointwise in distribution to the deterministic process $\{\frac{\beta}{2 \alpha} (1-e^{-\alpha t})+c'e^{-\alpha t},t\geq 0\}$ as $n\to \infty$. That is, for given $t$,
		$$\bar{\eta}^{(n)}(t) \to \frac{\beta}{2 \alpha} (1-e^{-\alpha t}) + c'e^{-\alpha t} \qquad \text{in distribution.}$$
		We now distinguish between the set of $j$ edges which are present at time 0 and the set of $N-j$ edges which are not present at time 0. 
		Note that $\eta^{(n)}(t)$ is then the sum of two independent binomial distributed random variables one with parameters $N-j$ and $p_0^{(n)}(t) = \frac{\beta}{\beta + (n-1)\alpha}(1-e^{-(\alpha + \frac{\beta}{n-1})t})$
		and the other with parameters $j$ and $p_1^{(n)}(t) = \frac{\beta}{\beta + (n-1)\alpha}(1-e^{-(\alpha + \frac{\beta}{n-1})t}) + e^{-(\alpha + \frac{\beta}{n-1})t}$. 
		So we obtain that the moment generating function (MGF) $$M^{(n)}(\sigma)= \mathbb{E}[e^{\sigma \bar{\eta}^{(n)}(t)}] = \mathbb{E}[e^{\frac{\sigma}{n} \eta^{(n)}(t)}]$$ is given by 
		$$M^{(n)}(\sigma) = \left(1- p_0^{(n)}(t)(1-e^{\sigma/n})\right)^{N-j}\left(1- p_1^{(n)}(t)(1-e^{\sigma/n})\right)^{j}.$$
		Noting that $1-e^{\sigma/n} = -\sigma/n + o(1/n)$, $p_0^{(n)}(t) = \frac{\beta (1-e^{-\alpha t})}{\alpha (n-1)} + o(1/n)$ and $p_1^{(n)}(t) =  e^{-\alpha t} + o(1)$, we obtain that 
		\begin{multline*}
			M^{(n)}(\sigma) = \left(1+ \frac{\sigma \beta (1- e^{-\alpha t})}{\alpha n(n-1)} + o(1/n^2)\right)^{N-j}
			\left(1 +\frac{\sigma e^{-\alpha t}}{n} + o(1/n)\right)^{j}
			\\
			= \left(1+ \frac{\sigma \beta  (1-e^{-\alpha t})}{2\alpha (N-j)}(1-j/N) + o(1/n^2)\right)^{N-j}\left(1 +\frac{\sigma e^{-\alpha t}}{n} + o(1/n)\right)^{j}.
		\end{multline*}
		We now distinguish between $j=0$ and $j = [c'n]$ for $0 < c'$.
		In the former case $M^{(n)}(\sigma) \to  e^{\sigma \frac{\beta  (1-e^{-\alpha t})}{2\alpha}}$, while in the latter case 
		\begin{multline*}
			M^{(n)}(\sigma) = \left(1+ \frac{\sigma \beta  (1-e^{-\alpha t})}{2\alpha (N-j)} + o(1/n^2)\right)^{N-j}\left(1 +\frac{\sigma e^{-\alpha t}}{j} c' + o(1/n)\right)^{j}\\ \to e^{\sigma \frac{\beta  (1-e^{-\alpha t})}{2\alpha}} 
			e^{\sigma c'  e^{-\alpha t}}
		\end{multline*}
		
		Those expressions are indeed the MGF of the constant $\frac{\beta}{2 \alpha} (1-e^{-\alpha t})+ c'e^{-\alpha t}$ for both $c' = 0$ and $c'>0$ and since convergence of MGFs implies convergence in distribution, we obtain pointwise convergence in distribution of the stochastic process to the deterministic process.
		
		We now focus on $c<\frac{\beta}{2\alpha}$. The  proof for $c>\frac{\beta}{2\alpha}$ is analogous.
		We deduced that for $0<c'<c < \frac{\beta}{2 \alpha}$ and  $ \displaystyle t_{c',c} = \frac{-\log\left(\frac{\beta -2 \alpha  c}{\beta -2 \alpha  c'}\right)}{\alpha}$, we have $ \bar{\eta}^{(n)}(t_{c',c}) \to c$ in distribution. Note that $t_{c,c'}>0$ indeed since $c>c'$. The above implies that if $t> t_{c',c}$, then $\mathbb{P}(\eta^{(n)}(t) \geq cn)$ is arbitrary close to 1 for large enough $n$. Since $\eta^{(n)}(t) \leq \max_{s \leq t} \eta^{(n)}(s)$, this immediately implies that $\mathbb{P}(\max_{s \leq t} \eta^{(n)}(s) \geq cn)$ is arbitrary close to 1 for large enough $n$.
		
		Similarly, for $t<t_{c',c}$, $\mathbb{P}(\eta^{(n)}(t) \geq cn)$ is arbitrary close to 0 for large enough $n$.
		Note that for $c<\frac{\beta}{2\alpha}$ and as long as $\eta(s) <c n$ (which is thus bounded above by $\frac{\beta}{2\alpha} n$), the process $\{\eta^{(n)}(t), t\geq 0\}$ has an upward drift. Hence if $s<t$ then $\mathbb{P}(\eta(t)\geq cn|\eta(s)\geq cn)\geq 1/2$. This implies that 
		$$\mathbb{P}(\eta^{(n)}(t) \geq cn|\max_{s \leq t} \eta^{(n)}(s) \geq cn)\geq \mathbb{P}(\eta(t)\geq cn|\eta(0)\geq cn)  \geq 1/2,$$
		which in turn implies that 
		$$\mathbb{P}(\max_{s \leq t} \eta^{(n)}(s) \geq cn) \leq  2 \mathbb{P}(\eta^{(n)}(t) \geq cn),$$
		which is arbitrary close to 0 for large enough $n$.
		This implies that for all $\epsilon>0$, we have $\mathbb{P}(|\tau_0([c n]) - t_{c',c}| >\epsilon) \to 0$, which proves convergence in probability. 
		
	\end{proof}
	Using Lemma \ref{stationaryedgeslow} we can prove Theorem \ref{mainHittingTimes} (a).
	\begin{proof}[Proof of  Theorem 2 (a) ]
		Let $i = [c  n]$ where $c < \frac{\beta}{2 \alpha}$ and set $t_c = t_{0,c}$.	
		The convergence in probability part is immediate, since by Lemma \ref{stationaryedgeslow} $\tau_0(i) \overset{p}{\to} t_c = \frac{-\log\left(1-\frac{2 \alpha}{\beta}c \right)}{\alpha} \text{ as } n\to \infty$. 
		Hence, for given $\epsilon>0$, we have that 
		$\mathbb{P}(\tau_0(i)  > t_c+\epsilon  ) <\epsilon$ and $\mathbb{P}(\tau_0(i)  > t_c-\epsilon  )>1-\epsilon$. Combining this with the fact that $\mathbb{E}(\tau_{j}(i)) \leq \mathbb{E}(\tau_0(i))$ for all $j < i$ and using the strong Markov property of $\{\eta(t),\ t\geq 0 \}$, we obtain that for given $\epsilon>0$ there exist $n_0$ such that for all $n>n_0$,
		\begin{multline*}
			\mathbb{E}(\tau_0(i)) \\
			= \mathbb{E}(\tau_0(i)  |  \tau_0(i)\leq t_c+\epsilon)\mathbb{P}(\tau_0(i)\leq t_c+\epsilon) + \mathbb{E}(\tau_0(i)  |  \tau_0(i)> t_c+\epsilon)\mathbb{P}(\tau_0(i)> t_c+\epsilon)\\
			\leq (t_c+\epsilon) + \epsilon (t_c+\epsilon+\mathbb{E}(\tau_{\eta(t_c+\epsilon)}(i)))\leq  t_c+\epsilon + \epsilon (t_c+\epsilon+\mathbb{E}(\tau_{0}(i)))
		\end{multline*}
		This implies that,
		$$\mathbb{E}(\tau_0(i)) \leq \frac{1+\epsilon}{1-\epsilon}(t_c+\epsilon).$$
		
		Using that for all $\epsilon>0$ and large enough $n$ we have $\mathbb{P}(\tau_0(i)  > t_c-\epsilon  )>1-\epsilon$. So, we get, by analogous calculations,
		\[
		\mathbb{E}(\tau_0(i)) \geq (t_c-\epsilon)(1-\epsilon).
		\]
		Together this implies that  $\mathbb{E}(\tau_0(i)) \to t_c$ as $n\to \infty$.

	\end{proof}
	
	\begin{remark}
		It follows with a little extra work from $\mathbb{E}[\tau_0(i)]<\infty$ and $\tau_0(i) \overset{p}{\to} t_c$  that $\tau_0(i)) \to t_c$ in expectation (see e.g.\ \cite[Problem 5.6.6]{Grim01}).
	\end{remark}

	The hitting time results for $c \geq \frac{\beta}{2\alpha}$, derived below provide us with bounds that are not tight, but they establish logarithmic growth for $c = \frac{\beta}{2\alpha}$; and exponential growth for $c>\frac{\beta}{2\alpha}$.
	
	\begin{proof}[Proof of Theorem 2 (b)]
		
		Let $T_s$ be the strong stationary time for the graph process $\{G(t),\ t\geq 0 \}$ as defined in Theorem \ref{Fastesttimetostationary}, and let $i = [\frac{\beta}{2\alpha}   n] $. We know that $E(T_{s}) = O(\log (n))$ and that $\eta(T_s)$ is binomially distributed with parameters $N$ and $\frac{\beta}{\beta + \alpha(n-1)}$, which implies that $\mathbb{E}(\eta(T_s)) = i + O(1)$. By the central limit theorem it follows that for large enough $n$, $\mathbb{P}(\eta(T_s)\leq i) \leq 2/3$. 
		Again observing that for all $0\leq j<i$, we have  $\mathbb{E}(\tau_j(i))\leq \mathbb{E}(\tau_0(i))$. Conditioning on whether $\{\eta(T_s)\geq i\}$  (equivalent to $\{\tau_0(i) \leq T_s\})$ or not gives,
		\begin{align*}
			\mathbb{E}(\tau_0(i)) & = \mathbb{E}(\tau_0(i)|\eta(T_s)\geq i) \mathbb{P}(\eta(T_s)\geq i) +  \mathbb{E}(\tau_0(i)|\eta(T_s)\leq i) \mathbb{P}(\eta(T_s)\leq i)	\\
			& = \mathbb{E}(T_s|\eta(T_s)\geq i) \mathbb{P}(\eta(T_s)\geq i) +  \mathbb{E}(T_s + \tau_{\eta(T_s)}(i)|\eta(T_s)\leq i) \mathbb{P}(\eta(T_s)\leq i)	\\
			&\leq  \mathbb{E}(T_s) + \frac{2}{3} (\mathbb{E}(\tau_{\eta(T_s)}(i)|\eta(T_s)<i) ) \\
			&\leq \mathbb{E}(T_s) + \frac{2}{3} \mathbb{E}(\tau_0(i)).
		\end{align*}	
		Which implies $\mathbb{E}(\tau_0(i)) \leq 3   \mathbb{E}(T_s) =  O(\log (n))$,
		as well as $\mathbb{E}(\tau_j(i)) = O(\log (n))$, for $j<i = [\frac{\beta}{2\alpha}   n]$.
		
		An analogous argument shows that $\mathbb{E}(\tau_j(i)) = O(\log (n))$ for $j>[\frac{\beta}{2\alpha}   n]$.
	\end{proof}
	
	Remaining is the case when $i=[c  n]$ and $c>\frac{\beta}{2 \alpha}$, which is dealt with below. First, we need a series of lemmas.
	\begin{lemma}
		\label{firstbound}
		Let $\tau_0(i)$ be as above, where $i=[c  n]$, $c>\frac{\beta}{2 \alpha}$. Define  $s=[\frac{\beta}{2\alpha} n]$.
		Let $C_{i\to s\to i}=\inf \{t>0;\ \eta(0)=i,\ \eta(t)=i,\ \eta(t)>\tau_i(s)  \}$ be the time it takes for the dynamic graph to go from $i$ edges to $s$ edges back to $i$ edges. Then,
		\begin{align*}
			E(\tau_0(i)) = \mathbb{E}(C_{i\to s\to i}) +  O(\log(n))
		\end{align*}
	\end{lemma}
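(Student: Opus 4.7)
The plan is to decompose both $\tau_0(i)$ and $C_{i\to s\to i}$ via the strong Markov property into the same exponentially large ``hard'' piece $\tau_s(i)$ plus an $O(\log n)$ ``easy'' piece, and then subtract. The strong Markov property of the ergodic finite-state chain $\{\eta(t),\ t\geq 0\}$ (cited already in the proof of Proposition \ref{hittingtheorem} via \cite{SMP}) is all the machinery we need, together with part (b) of Theorem \ref{mainHittingTimes}.

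First, since $0<s<i$, the path from $0$ to $i$ must cross $s$, so $\tau_0(s)\leq \tau_0(i)$ almost surely. Applying the strong Markov property at $\tau_0(s)$ and arguing as in the derivation of \eqref{StrongMarkovProperty} gives $\tau_0(i) = \tau_0(s) + \tilde{\tau}_s(i)$, where $\tilde{\tau}_s(i) \stackrel{d}{=} \tau_s(i)$ and is independent of $\mathcal{F}_{\tau_0(s)}$; in particular
\begin{equation*}
\mathbb{E}(\tau_0(i)) = \mathbb{E}(\tau_0(s)) + \mathbb{E}(\tau_s(i)).
\end{equation*}
Likewise, by definition $C_{i\to s\to i}$ is the sum of $\tau_i(s)$ and the subsequent time to climb back to $i$; applying the strong Markov property at the almost-surely finite stopping time $\tau_i(s)$ yields $\mathbb{E}(C_{i\to s\to i}) = \mathbb{E}(\tau_i(s)) + \mathbb{E}(\tau_s(i))$. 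Subtracting the two identities cancels the common exponentially large term $\mathbb{E}(\tau_s(i))$ and leaves
\begin{equation*}
\mathbb{E}(\tau_0(i)) - \mathbb{E}(C_{i\to s\to i}) = \mathbb{E}(\tau_0(s)) - \mathbb{E}(\tau_i(s)).
\end{equation*}

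It remains to show the right-hand side is $O(\log n)$. For $\mathbb{E}(\tau_0(s))$ this is exactly part (b) of Theorem \ref{mainHittingTimes}, applied with target $s=[\frac{\beta}{2\alpha}n]$ and start $j=0<s$. For $\mathbb{E}(\tau_i(s))$ we use the ``analogous argument'' remarked at the end of the proof of part (b), which gives $\mathbb{E}(\tau_j(s)) = O(\log n)$ also for $j>s$; the underlying reason is symmetric, namely that starting from $i$ the process has a downward drift towards $s$ and reaches $s$ within $O(\log n)$ time via the same coupling-with-stationarity argument that uses $T_s$.

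I foresee no substantial obstacle: the argument is a short renewal-type splitting, and both ingredients (strong Markov and the logarithmic hitting-time bound in both directions of the drift) are already in hand. The only point requiring care is to be explicit that the ``analogous argument'' really does cover the descending case $i\to s$, which is straightforward because the stationary distribution of $\eta$ concentrates on values of order $s$, so starting above $s$ the chain hits $s$ with probability bounded away from $0$ within time of order $\mathbb{E}(T_s) = O(\log n)$.
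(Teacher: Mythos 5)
Your proof is correct and follows exactly the paper's argument: both decompose $\mathbb{E}(\tau_0(i)) = \mathbb{E}(\tau_0(s)) + \mathbb{E}(\tau_s(i))$ and $\mathbb{E}(C_{i\to s\to i}) = \mathbb{E}(\tau_i(s)) + \mathbb{E}(\tau_s(i))$ via the strong Markov property, cancel the common $\mathbb{E}(\tau_s(i))$ term, and invoke Theorem~\ref{mainHittingTimes}(b) to bound the remaining $\mathbb{E}(\tau_0(s))$ and $\mathbb{E}(\tau_i(s))$ by $O(\log n)$. The only (harmless) difference is that you spell out the justification of the descending bound $\mathbb{E}(\tau_i(s)) = O(\log n)$, which the paper simply cites as already established.
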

	\begin{proof}
		By the strong Markov property we have that $\mathbb{E}(C_{i\to s\to i}) = \mathbb{E}(\tau_i(s))+\mathbb{E}(\tau_s(i))$ and $\mathbb{E}(\tau_0(i))=\mathbb{E}(\tau_0(s))+\mathbb{E}(\tau_s(i))$. By Theorem \ref{mainHittingTimes}, both $\mathbb{E}(\tau_i(s))$ and $\mathbb{E}(\tau_0(s))$ are of order $O(\log (n))$. We get,
		\begin{multline*}
			\mathbb{E}(\tau_0(i))
			= \mathbb{E}(\tau_0(s))+ \mathbb{E}(\tau_s(i)) \\
			=\mathbb{E}(\tau_0(s))+ \mathbb{E}(C_{i\to s\to i}) - \mathbb{E}(\tau_i(s)) =  \mathbb{E}(C_{i\to s\to i})+ O(\log(n)).	
		\end{multline*}
	\end{proof}
	
	Lemma \ref{firstbound} can be used to derive bounds for $\mathbb{E}(\tau_0(i))$ as $C_{i\to s\to i}$ is a cycle time for the dynamic graph, and hence results from renewal theory can be applied. Before doing so we shall need two more lemmas.
	\begin{lemma}
		\label{relative_entropy}
		Let $i = [c  n]$, $c>\frac{\beta}{2\alpha}$ and let $p_n = \frac{\beta}{\beta+(n-1)\alpha}$ be the edge probability at stationarity. Then,
		\begin{align*}
			N \cdot D\left(\frac{i}{N}||p_n\right) = n\left( c\log(\frac{2\alpha}{\beta}c)-c+\frac{\beta}{2\alpha}\right) +O(1)
		\end{align*}
		where $D(a||p) = a \log(\frac{a}{p})+(1-a) \log(\frac{1-a}{1-p})$ is the relative entropy of a Bernoulli($a$) random variable with respect to a Bernoulli($p$) random variable.
	\end{lemma}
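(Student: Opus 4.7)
The plan is to unfold the definition of relative entropy and Taylor-expand each logarithm in powers of $1/n$. Since $i = cn + O(1)$ and $p_n = \beta/((n-1)\alpha + \beta)$, both $i/N$ and $p_n$ are of order $1/n$, while $1 - i/N$ and $1 - p_n$ are both $1 - O(1/n)$. The essential bookkeeping point is that $\log(i/N)$ and $\log p_n$ individually each contain a $-\log n$ term, so one must combine them inside a single logarithm \emph{before} expanding; otherwise the $\log n$ parts do not visibly cancel and one obtains only a bound of order $O(\log n)$ instead of the required $O(1)$. Exactly the same remark applies to the pair $1 - i/N$ and $1 - p_n$: one must form the ratio before taking the log.

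For the first summand $i \log\bigl((i/N)/p_n\bigr)$, I would compute, using $N = n(n-1)/2$ and $i/n = c + O(1/n)$,
\[
\frac{i/N}{p_n} \;=\; \frac{i\bigl((n-1)\alpha + \beta\bigr)}{N\beta} \;=\; \frac{2\alpha}{\beta}c + O(1/n).
\]
Taking logs gives $\log\bigl((i/N)/p_n\bigr) = \log\!\bigl(\tfrac{2\alpha}{\beta}c\bigr) + O(1/n)$, and multiplying by $i = cn + O(1)$ yields $cn\,\log(\tfrac{2\alpha}{\beta}c) + O(1)$.

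For the second summand $(N-i)\log\bigl((1-i/N)/(1-p_n)\bigr)$, I would expand
\[
\frac{1-i/N}{1-p_n} \;=\; 1 + \frac{\beta - 2c\alpha}{(n-1)\alpha} + O(1/n^2),
\]
and apply $\log(1+x) = x + O(x^2)$ (valid since $x = O(1/n)$) to get $\log\bigl((1-i/N)/(1-p_n)\bigr) = \frac{\beta - 2c\alpha}{(n-1)\alpha} + O(1/n^2)$. Multiplying by $N - i = \tfrac{n(n-1)}{2} - cn + O(1)$, the leading term $\tfrac{n(n-1)}{2}\cdot\tfrac{\beta-2c\alpha}{(n-1)\alpha}$ simplifies to $\tfrac{n\beta}{2\alpha} - cn$; the cross term $(-cn)\cdot\tfrac{\beta-2c\alpha}{(n-1)\alpha}$ is $O(1)$, and the error $(N-i)\cdot O(1/n^2) = O(1)$ as well. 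The whole contribution is therefore $\tfrac{n\beta}{2\alpha} - cn + O(1)$.

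Adding the two contributions gives $N \cdot D(i/N\|p_n) = n\bigl(c\log(\tfrac{2\alpha}{\beta}c) - c + \tfrac{\beta}{2\alpha}\bigr) + O(1)$, exactly as claimed. There is no real obstacle here beyond carefully tracking error terms through the expansions; the only mildly subtle point is the combination-before-expansion noted above, which is precisely what makes the $\log n$ divergences cancel and leaves only an $O(1)$ remainder.
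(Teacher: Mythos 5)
Your proposal is correct and follows essentially the same route as the paper: the same two-term decomposition of $N \cdot D(i/N \,\|\, p_n)$ and the same $1/n$ Taylor expansions, with the first summand handled identically. The only cosmetic difference is in the second summand, where you form the ratio $(1-i/N)/(1-p_n)$ before expanding, while the paper expands $\log(1-i/N)$ and $\log(1-p_n)$ separately; your remark that the combination is \emph{necessary} there is a slight overstatement, since neither of those two logarithms carries a $\log n$ divergence (both are $O(1/n)$), but the computation is valid either way.
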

	\begin{proof}
		We have that,
		\begin{equation}
			\label{ND1}
			N \cdot D\left(\frac{i}{N}||p_n\right) = i\log\left( \frac{i/N}{p_n}\right) +(N-i)\log \left( \frac{1-i/N}{1-p_n} \right).
		\end{equation}
		Since $ \frac{i/N}{p_n} = \frac{c}{\beta/(2\alpha)} + O(\frac{1}{n}) $ and $i = cn+O(1)$ we get that $\log \left(\frac{i/N}{p_n} \right) = \log \left(\frac{c}{\beta/(2\alpha)}\right) +O(\frac{1}{n}) $. Which implies,
		\begin{equation}
			\label{ND2}
			i\log\left( \frac{i/N}{p_n}\right) = cn\log\left( \frac{c}{\beta/(2\alpha)} \right) + O(1).
		\end{equation}
		
		We also have, again by Taylor approximation,
		\begin{align*}
			&\log\left(1-i/ N\right) = -\frac{i}{N}+O(n^{-2}) \\
			& \log(1-p_n) = -p_n+O(n^{-2}).
		\end{align*}
		Together with $Np_n = \frac{\beta}{2\alpha}n+O(1)$ this implies,
		\begin{equation}
			\label{ND3}
			(N-i)\log \left( \frac{1-i/N}{1-p_n} \right) = N p_n-i +O(1)=n\left( \frac{\beta}{2\alpha}-c\right) +O(1).
		\end{equation}
		Inserting \eqref{ND2} and \eqref{ND3} in \eqref{ND1} shows that,
		\begin{align*}
			N \cdot D\left(\frac{i}{N}||p_n\right) = n\left( c\log(\frac{2\alpha}{\beta}c)-c+\frac{\beta}{2\alpha}\right) +O(1).
		\end{align*}
	\end{proof}
	\begin{remark}
		For coming results note that $c\log(\frac{2\alpha}{\beta}c)-c+\frac{\beta}{2\alpha}>0$ if $c > \frac{\beta}{2\alpha}$.
	\end{remark}

	Finally, for proving Theorem \ref{mainHittingTimes} (c) we also need the following lemma
	\begin{lemma}
		\label{expoprobab}
		Assume $c > c' > \frac{\beta}{2 \alpha}$. Let $i = [cn]$, $j = [c'n]$ and $s = [\frac{\beta}{2 \alpha}]$.
		Then there exists $\gamma = \gamma(c',c)>0$ such that $\mathbb{P}(\tau_j(i) <\tau_j(s)) < e^{-\gamma n}$ for all large enough $n$. That is, the probability that after starting in $j$, $\{\eta(t), t\geq 0\}$ reaches $i$ before $s$ is exponentially small in $n$.  
		
		Furthermore, $\mathbb{P}(\tau_{i-1}(s) < \tau_{i-1}(i))>\bar{\delta} >0$. 
	\end{lemma}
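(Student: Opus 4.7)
The plan is to derive both statements from the classical birth--death hitting-probability formula applied to the process $\{\eta(t),\ t\geq 0\}$, which is reversible with stationary distribution $\pi(m) = \binom{N}{m} p_n^m (1-p_n)^{N-m}$. Using that $\prod_{\ell=1}^m \mu_\ell/\lambda_\ell = \pi(0)/\pi(m)$ for this birth--death chain, a standard harmonic-function computation gives, for all $s \leq k \leq i$,
\[
\mathbb{P}_k(\tau_i < \tau_s) = \frac{\sum_{m=s}^{k-1} 1/\pi(m)}{\sum_{m=s}^{i-1} 1/\pi(m)}.
\]
Both claims will then follow by suitable estimates on these sums, using that $\pi$ is binomial with mode near $Np_n \sim s$ and is therefore decreasing on $[s, i-1]$, together with Lemma \ref{relative_entropy}.

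For the first claim (take $k = j$), I would bound the numerator above by $(j-s)/\pi(j-1) \leq n/\pi(j-1)$ and the denominator below by its last term $1/\pi(i-1)$, yielding $\mathbb{P}_j(\tau_i < \tau_s) \leq n\,\pi(i-1)/\pi(j-1)$. Setting $g(x) := x\log(2\alpha x/\beta) - x + \beta/(2\alpha)$, Stirling's formula combined with Lemma \ref{relative_entropy} gives $-\log \pi(m) = n\, g(m/n) + O(\log n)$ for $m$ of order $n$. Since $g'(x) = \log(2\alpha x/\beta) > 0$ on $(\beta/(2\alpha), \infty)$, the function $g$ is strictly increasing there, so $g(c) > g(c')$. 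Hence $\pi(i-1)/\pi(j-1) \leq e^{-n(g(c)-g(c')) + O(\log n)}$, and any $\gamma \in (0, g(c) - g(c'))$ works for all large enough $n$.

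For the second claim (take $k = i-1$), the formula reduces to
\[
\mathbb{P}_{i-1}(\tau_s < \tau_i) = \frac{1/\pi(i-1)}{\sum_{m=s}^{i-1} 1/\pi(m)},
\]
so it suffices to prove that $\sum_{m=s}^{i-1} \pi(i-1)/\pi(m)$ is bounded above by an absolute constant. I would split this sum at an intermediate level $m_* = [c_* n]$ with $c_* := (c + \beta/(2\alpha))/2 \in (\beta/(2\alpha), c)$. The at most $n$ terms with $m \leq m_*$ are each bounded, by monotonicity of $\pi$ on $[s,i-1]$, by $\pi(i-1)/\pi(m_*) \leq e^{-n(g(c)-g(c_*)) + O(\log n)}$, so their total contribution is $o(1)$. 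For $m > m_*$, I would use that the ratio $\pi(k)/\pi(k-1) = (N-k+1)p_n/(k(1-p_n))$ is decreasing in $k$ and tends, at $k = m_* + 1$, to $\beta/(2\alpha c_*) < 1$ as $n \to \infty$; so for large $n$ and all $k \geq m_* + 1$ this ratio is bounded by some constant $R < 1$. This gives $\pi(i-1)/\pi(m) \leq R^{i-1-m}$ for $m \geq m_*$, whose geometric summation is $O(1)$. Combined, $\mathbb{P}_{i-1}(\tau_s < \tau_i)$ is bounded below by some positive constant $\bar{\delta}$.

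The main obstacle will be the lower bound in the second claim: a crude pointwise estimate such as $\sum_{m=s}^{i-1} \pi(i-1)/\pi(m) \leq n$ would only deliver $\mathbb{P}_{i-1}(\tau_s < \tau_i) \geq 1/n$, which is too weak. The key observation is that the sum is dominated by the $O(1)$ terms closest to $i-1$, where the ratios $\pi(k)/\pi(k-1)$ are already bounded away from $1$ by a constant, so a geometric-series bound applies; the remote terms contribute negligibly thanks to the large-deviation decay from Lemma \ref{relative_entropy}.
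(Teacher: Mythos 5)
Your approach is correct, but it takes a genuinely different route from the paper's. The paper builds a custom exponential supermartingale $y(t) = (\eta'(t)-j)\ind(\eta'(t)\leq j) + (1+\delta')^{\eta'(t)-j}\ind(\eta'(t)>j)$ (for a carefully chosen small $\delta'>0$), verifies the supermartingale property by a direct drift computation, and then applies the optional stopping theorem at $\tau_j(i,s)=\min(\tau_j(i),\tau_j(s))$ to obtain both bounds. You instead invoke the classical gambler's-ruin / reversibility formula for a birth--death chain to get an \emph{exact} expression for the hitting probability as a ratio of partial sums of $1/\pi(m)$, and then estimate those sums using that $\pi$ is binomial with mode near $s$ and Lemma~\ref{relative_entropy}. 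Your approach is arguably cleaner and more modular for a reader who already knows the hitting-probability formula, and it delivers a sharper large-deviation rate $g(c)-g(c')$ than the paper's $(c-c')\log(1+\delta')$; the paper's supermartingale argument is more elementary and self-contained (it needs no reversibility or Stirling estimates). Both correctly capture the key point for the second claim, namely that the relevant sum is dominated by $O(1)$ terms near $i-1$ where the one-step ratios $\pi(k)/\pi(k-1)$ are bounded away from $1$.

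Two small technical points worth flagging. First, the reversibility identity you quote is off by one index: detailed balance gives $\pi(\ell-1)\lambda_{\ell-1}=\pi(\ell)\mu_\ell$, hence $\prod_{\ell=1}^m \mu_\ell/\lambda_{\ell-1}=\pi(0)/\pi(m)$, not $\prod_{\ell=1}^m \mu_\ell/\lambda_\ell$. Consequently the correct weights in the hitting-probability formula are $w_m = 1/(\lambda_m\pi(m))$ rather than $1/\pi(m)$. Since $\lambda_m=(N-m)\beta/(n-1)=\Theta(n)$ uniformly on $[s,i-1]$, with ratios $\lambda_m/\lambda_{m'}$ bounded between positive constants (indeed tending to $1$), this changes nothing in either estimate, but should be stated precisely. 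Second, the binomial mode sits at $\lfloor (N+1)p_n\rfloor$, which may differ from $s=[\frac{\beta}{2\alpha}n]$ by $O(1)$, so $\pi$ need not be monotone on the full interval $[s,i-1]$; a one-line remark that the unimodality of $\pi$ together with the $O(1)$ offset suffices for the bounds you use would close this cosmetic gap.
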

	\begin{proof}
		If $\eta(t) \geq \frac{\beta}{2 \alpha} n$, the probability that the next change in  $\eta(t)$ is upwards is at most $1/2$, while if $\eta(t) \geq j$, then the probability that the next jump is upward is at most $p(j,j+1)=\frac{\beta (N-j)}{\beta(N-j)+(n-1)j \alpha}$,  
		which converges to $ \frac{\beta}{\beta + 2c'\alpha} = \frac{1}{2(1+\delta)}$ for $\delta = \frac{1}{2}(c'\frac{2 \alpha}{\beta} -1)$, which is strictly positive since $ c' > \frac{\beta}{2 \alpha}$. 
		This implies that for $k\geq j$ and $\delta' \in (0,\delta)$, we have $p(k,k+1) \leq \frac{1}{2(1+\delta')}.$ 
		
		For $s <j <i$,  let $\tau_j(i,s) = \min(\tau_j(i),\tau_j(s))$ be the first time $\eta(t)$ hits the boundary of the interval $[s,i]$. Since we are only interested in $\mathbb{P}(\eta(\tau_j(i,s)) = s)$, we can consider the process $\{\eta'(t),t\geq 0\}$, where $\eta'(t) = \eta(t)$ if $t\leq \tau_j(i,s)$  and $ \eta'(t) = \eta(\tau_j(i,s))$ otherwise. We note that $\{\eta'(t),0 \leq  t \leq \tau_j(i,s)\} = \{\eta(t),0 \leq  t \leq \tau_j(i,s)\}$. 
		
		Let $\delta' \in (0,\delta)$ and $n$ be large enough, so that $p(j,j+1)<\frac{1}{2(1+\delta')}$.
		We now check that the process defined through $\eta'(0) =j$ and 
		$$y(t)= (\eta'(t)- j) \ind(\eta'(t) \leq j) + (1+\delta')^{\eta'(t)-j} \ind(\eta'(t) > j)$$ 
		is a supermartingale.
		Indeed, if $\eta'(t)<j$, then the probability that the next jump is up is less than $1/2$, so trivially, the expected $y$ value after the next jump is bounded above by $\frac{1}{2}(y(t)+1) + \frac{1}{2} (y(t)-1) = y(t)$.
		If $\eta'(t)\geq j$ the probability that the next jump is up is less than $\frac{1}{2(1+\delta')}$. So if $\eta'(t)=j$, the expected $y$ value after the next jump is bounded above by
		$$(1+\delta') \frac{1}{2(1+\delta')} - \left(1- \frac{1}{2(1+\delta')}\right) = \frac{1}{2(1+\delta')} - \frac{1}{2}<0 = y(t)|_{\eta'(t)=j}.$$
		While for $\eta'(t) > j$ this expectation is bounded above by
		\begin{multline*}
			y(t) (1+\delta') \frac{1}{2(1+\delta')} + y(t) \frac{1}{1+\delta'} 
			\left(1- \frac{1}{2(1+\delta')}\right) \\
			= \frac{y(t)}{2(1+\delta')^2}\left((1+\delta')^2 + (2(1+\delta')- 1)\right)
			= \frac{y(t)}{2(1+\delta')^2}(2 (1+\delta')^2 -\delta'^2)
			< y(t).
		\end{multline*}
		Let $\mathcal{F}'_t$ be the $\sigma$-algebra generated through $\{\eta'(s),s \in [0,t]\}$. The above computations implies that $\mathbb{E}(y(s)|\mathcal{F}'_t) \leq y(t)$, for all $s>t$ and therefore $\{y(t),t\geq 0\}$ is a supermartingale.

		We obtain by optional stopping arguments that
		\begin{multline*}
			0 = y(0)\geq \mathbb{E}(y(\tau_j(i,s))) \\
			=  \mathbb{P}(y(\tau_j(i,s))=i)  (1+\delta')^{i-j} - \left(1- \mathbb{P}(y(\tau_j(i,s))=i)\right)  (j-s)\\ 
			= \mathbb{P}(y(\tau_j(i,s))=i)  \left((1+\delta')^{i-j} +(j-s)\right)- (j-s),
		\end{multline*}
		which implies 
		$$\mathbb{P}(\tau_j(i) < \tau_j(s))= \mathbb{P}(\eta(\tau_j(i,s)) =i) \leq  \frac{j-s}{(1+\delta')^{i-j} +j-s} \leq \frac{(c'-\frac{\beta}{2 \alpha}) n+1}{(1+\delta')^{(c-c')n-1}}.$$ 
		Setting $\gamma \in (0,(c-c')\log(1+\delta'))$ completes the proof of the first part of the lemma. 
		
		In order to prove the second part of the lemma note that if $\eta(0) = \eta'(0) = i-1$, then $y(0) = (1+\delta')^{i-1-j}$. Using the same supermartingale argument as above, we obtain that
		$$
		(1+\delta')^{i-1-j} 
		\geq 
		\left(1-\mathbb{P}(\tau_{i-1}(s) < \tau_{i-1}(i))\right) (1+\delta')^{i-j} - \mathbb{P}(\tau_{i-1}(s) < \tau_{i-1}(i)) (j-s),
		$$
		which implies $\mathbb{P}(\eta(\tau_{i-1}(s)) < \tau_{i-1}(i))) \geq \frac{\delta'(1+\delta')^{i-1-j}}{(1+\delta')^{i-j}+ (j-s)}$. Because $c > c' > \frac{\beta}{2 \alpha}$ and $i = [cn]$, $j = [c'n]$ and $s = [\frac{\beta}{2 \alpha}]$, the latter expression converges to $\frac{\delta'}{1+\delta'}>0.$ Choosing $\bar{\delta} \in (0, \frac{\delta'}{1+\delta'})$ completes the proof of the lemma.
	\end{proof}

	We can now prove Theorem \ref{mainHittingTimes} (c).
	\begin{proof}[Proof of Theorem \ref{mainHittingTimes} (c)]
		Observe that $\{\eta(t),\ t\geq 0 \} $ is a regenerative process. 
		Let $T_{\geq i} = \int_{0}^{C_{i\to s\to i}} 1\{\eta(t)\geq i \} dt $ be the time $\{\eta(t),\ t\geq 0 \} $ spends above state $i$ in a $(i\to s\to i)$ cycle, where $s = [\frac{\beta}{2\alpha}   n]$. 
		Since $C_{i\to s\to i}$ is a renewal time for $\{\eta(t),\ t\geq 0 \} $ we have, by basic renewal theory,
		\begin{align*}
			\lim\limits_{t \to \infty} \mathbb{P}(\eta(t)\geq i) = \frac{\mathbb{E}( T_{\geq i})}{\mathbb{E}(C_{i\to s\to i})}.
		\end{align*}
		If $T_s$ is a strong stationary time for the dynamic graph, then $\lim\limits_{t \to \infty} \mathbb{P}(\eta(t)\geq i) =  \mathbb{P}(\eta(T_s)\geq i)$. Hence,
		\begin{align}
			\label{regeq}
			\mathbb{E}(C_{i\to s\to i})	= \frac{\mathbb{E}( T_{\geq i} )}{ \mathbb{P}(\eta(T_s)\geq i) }.
		\end{align}
		\begin{figure}[H]
			\includegraphics{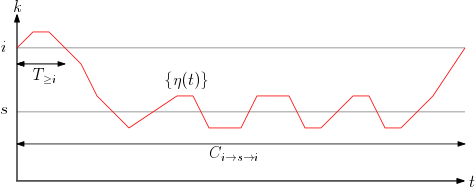}
			\caption{Visual Aid for Equation \eqref{regeq}}
		\end{figure}
		Since $\eta(T_s)\sim$ Bin($N, p_n)$, with $p_n = \frac{\beta}{\beta + (n-1)\alpha}$ we can give upper and lower bounds for $\mathbb{P}(\eta(T_s)\geq i)$, see \cite[p. 114]{ash}.
		\begin{align*}
			( 8 i  )^{-1/2} \exp\{-N \cdot D\left( \frac{i}{N}||p_n\right) \} \leq \mathbb{P}(\eta(T_s)\geq i) \leq \exp\{-N \cdot D\left( \frac{i}{N}||p_n\right) \}
		\end{align*}
		where $D(a||p) = a \log(\frac{a}{p})+(1-a) \log(\frac{1-a}{1-p})$. 
		
		Next we show that $\mathbb{E}(T_{\geq i}) = \Theta(n^{-1})$. Let $H_i$ be the holding time in state $i$. Then,
		\begin{align*}
			E(T_{\geq i})\geq E(H_i) = \frac{ n-1 }{ (N-i)\beta+i(n-1)\alpha  } > K n^{-1},  
		\end{align*}
		for some $K >0$ and $n$ large enough. By the strong Markov property of the process $\{\eta(t),\ t\geq 0 \} $ we have that,
		\begin{align*}
			\mathbb{E}(T_{\geq i}) = p(i,i+1) ( \mathbb{E}(H_i)+\mathbb{E}(\tau_{i+1}(i))+E(T_{\geq i}))+p(i,i-1) ( \mathbb{E}(H_i) + q_{i-1}  \mathbb{E}(T_{\geq i})	)
		\end{align*}
		where $q_{i-1}=\mathbb{P}(\tau_{i-1}(i)<\tau_{i-1}(s))$ is the probability of reaching state $i$ before state $s$, if $\{\eta(t),\ t\geq 0 \} $ starts in state $i-1$. We get,
		\begin{align*}
			\mathbb{E}(T_{\geq i})  = \frac{  \mathbb{E}(H_i) +p(i,i+1)\mathbb{E}(\tau_{i+1}(i)) }{  p(i,i-1) (1-q_{i-1})   }
		\end{align*}
		Now, $\mathbb{E}(H_{i+k}) = \frac{ n-1 }{ (N-i-k)\beta+(i+k)(n-1)\alpha  } = \Theta(n^{-1})$, for fixed $k$ and $i=[c  n]$.
		Furthermore, 
		\begin{align*}
			&p(i+k,i+k+1) \to \frac{\beta}{\beta + 2c\alpha} \text{ as } n \to \infty\\
			&p(i+k,i+k-1) \to \frac{ 2c\alpha}{\beta + 2c\alpha} \text{ as } n \to \infty
		\end{align*}
		again for fixed $k$ and $i=[c  n]$.
		
		Secondly, since $\mathbb{E}(\tau_{i+2}(i+1))	< \mathbb{E}(\tau_{i+1}(i))	$ we get,
		\begin{align*}
			\mathbb{E}(\tau_{i+1}(i)) &= \mathbb{E}(H_{i+1}) + p(i+1,i+2) (\mathbb{E}(\tau_{i+2}(i+1))	+ \mathbb{E}(\tau_{i+1}(i))	)\\
			& \leq E(H_{i+1}) + 2p(i+1,i+2)   \mathbb{E}(\tau_{i+1}(i)) 
		\end{align*}
		Since $2p(i+1,i+2)<1$ we get,
		\begin{align*}
			\mathbb{E}(\tau_{i+1}(i)) \leq \frac{ \mathbb{E}(H_{i+1})  }{1-2p(i+1,i+2)} = O(n^{-1}).
		\end{align*}
		In the second part of Lemma \ref{expoprobab}  we show that $1-q_{i-1} \not \to 0$ as $n \to \infty$. 
		Therefore, $\mathbb{E}(T_{\geq i}) = O(n^{-1})$.
		
		Combining all of the above we get,
		\begin{align*}
			\Theta(n^{-1}) \exp\left\lbrace N \cdot D\left(\frac{i}{N}||p\right)\right\rbrace  \leq \mathbb{E}(C_{i\to s\to i}) \leq \Theta(n^{-1/2}) \exp\left\lbrace N \cdot D\left(\frac{i}{N}||p\right)\right\rbrace. 
		\end{align*}
		By Lemma \ref{firstbound} we have 
		\begin{align*}
			\mathbb{E}(C_{i\to s\to i})-O(\log (n)) \leq \mathbb{E}(\tau_0(i)) \leq \mathbb{E}(C_{i\to s\to i})+O(\log(n)),
		\end{align*}
		which together with Lemma \ref{relative_entropy} implies, 
		\begin{multline*}
			\Theta(n^{-1}) \exp\left\{n\left( c\log(\frac{2\alpha}{\beta}c)-c+\frac{\beta}{2\alpha}\right)\right\rbrace  \leq \mathbb{E}(\tau_0(i))  \\
			\leq	\Theta(n^{-1/2}) \exp\left\{n\left( c\log(\frac{2\alpha}{\beta}c)-c+\frac{\beta}{2\alpha}\right) \right\rbrace .
		\end{multline*}

		What remains to prove is that for $n\to \infty$,  $\tau_0([c n])/\mathbb{E}[\tau_0([c n])]$ converges in distribution to an exponential random variable with mean 1.
		
		To do this we need some definitions and assumptions.
		Let $T_0=0$ and $T_{i+1}$ be the smallest time after $T_i$, such that in
		the interval $I_{i+1}=(T_i,T_{i+1}]$ all edges are updated in the sense of (iia) of the defining properties of the dynamic Erd\H{o}s-R\'enyi graph. The lengths of those intervals are trivially i.i.d.
		Furthermore, let $A_i$ be the event
		$\{\max_{t \in I_i} \eta(t) > [c n]\}$.
		Note that $A_i$ may be dependent on $A_{i-1}$ and $A_{i+1}$ but is independent
		of  $A_j$ for $|j-i| \geq 2$ and independent of $T_j$ for all $j \geq 1$. 
		Furthermore, since $\eta(T_i)$ is binomially distributed with parameters $N$ and $p= \frac{\beta}{\beta + \alpha(n-1)}$ for all  $i \geq 2$, $\mathbb{P}(A_i)$ is the same for all $i \geq 2$ and we denote this probability by $q = q(n)$. 
		
		Let $J = \min\{i: A_i \text{ occurs}\}$ and $J' = \min\{j: A_{2j} \text{ occurs}\}$. Note that $J \leq 2J'$ and $J'$ is geometrically distributed with parameter $q$. In particular,  $2/q =  2\mathbb{E}(J') \geq \mathbb{E}(J)$. Furthermore, $$\mathbb{E}(\tau_0([cn]) = \mathbb{E}(T_1)\mathbb{E}(J) + O(\log (n)) = O(\log (n)) \mathbb{E}(J)$$ by the independence of the $A_i$'s and $J$ and by $\mathbb{E}(T_1) = O(\log (n))$.
		Together this  implies that $q$ is exponentially small in $n$, since $\mathbb{E}(\tau_0([cn])$ grows exponentially in $n$.
		
		Our strategy of proof is to show that, 
		\begin{equation}
			\label{qineq1}
			\mathbb{P}(A_i|A_j^c, j=1,2,\dots,i-1) \leq q \qquad \text{for  $i \geq 1$}
		\end{equation}
		and  
		\begin{equation}
			\label{qineq2}
			\mathbb{P}(A_i|A_j^c, j=1,2,\dots,i-1) \geq q(1+o_n(1)) \qquad \text{for  $i \geq 3$.}
		\end{equation}
		Once this is proved, we obtain immediately that $J$ dominates a geometric random variable with parameter $q$ (say $J^-$ and $J$ conditioned on, is dominated by  a geometric random variable with parameter $q(1+o_n(1))$ plus 2, say $J^+$. 
		Note that $\mathbb{E}(J^-) = \frac{1}{q}$ 
		and 
		$\mathbb{E}(J^+) =  2 +  \frac{1}{q(1+o_n(1)} = \frac{1}{q}(1+o_n(1))$.
		So, it follows that 
		$$\frac{1}{q} \leq \mathbb{E}(J) \leq  \frac{1}{q}(1+o_n(1)).$$ 
		
		Since for all $i \geq 1$, we have $T_{i+1}-T_i = O(\log(n))$ w.h.p.\ and  $\mathbb{E}(T_{i+1}-T_i) = O(\log(n))$ and the events $A_i$ are independent of the times $T_i$ and by $T_J \to \infty$ w.h.p., we have (also w.h.p.)
		$$\frac{\tau_0([c n])}{\mathbb{E}(\tau_0([c n]))} =
		\frac{T_J + O(\log(n))}{\mathbb{E}[T_J]+O(\log(n))} =
		\frac{J \cdot T_J/J +O(\log(n))}{\mathbb{E}(T_1)\mathbb{E}(J) +O(\log(n))} = \frac{J}{\mathbb{E}(J)}\frac{T_J}{J\mathbb{E}(T_1)} (1+ o_n(1)).$$
		
		By the strong law of large numbers and $J \to \infty$ w.h.p.,  $\frac{T_J}{J\mathbb{E}(T_1)} = 1+o_n(1)$ w.h.p. So, $$\frac{\tau_0([c n])}{\mathbb{E}(\tau_0([c n])} = \frac{J}{\mathbb{E}(J)}(1+ o_n(1)) \qquad \text{w.h.p.}$$
		
		By standard results on geometric distributed random variables $J^-/\mathbb{E}(J^-)$ converges in distribution to an exponential random variable with parameter 1.
		Similarly we deduce that  $(J^+-2)/\mathbb{E}(J^+-2)$ converges in distribution to an exponential random variable with parameter 1. Since $J \to \infty$ w.h.p., this also implies that $(J^+)/\mathbb{E}(J^+)$ converges in distribution to an exponential random variable with parameter 1. Recalling  $J$ is stochastically sandwiched between $J^-$ and $J^+$ and 
		$$\mathbb{E}(J) = \mathbb{E}(J^-)(1+o_n(1)) =\mathbb{E}(J^+)(1+o_n(1)),$$ we obtain that $\frac{J}{\mathbb{E}(J)}$ converges in distribution to  an exponential random variable with parameter 1.
		Therefore, $\frac{\tau_0([c n])}{\mathbb{E}(\tau_0([c n]))}$ converges in distribution to an exponential random variable with parameter 1. 
		
		The only things left to prove are inequalities (\ref{qineq1}) and (\ref{qineq2}).  
		To prove (\ref{qineq1}), first note that  for $i \geq 2$, $$\mathbb{P}(A_i|A_j^c, j=1,2,\dots,i-1) \leq \mathbb{P}(A_i) = q,$$ since for all $t>0$, the process $\{\eta(t+u),u\geq 0\}$ stochastically dominates the process $\{\eta(t+u),u\geq 0|\eta(t)\leq [cn]\}$, where the latter is equal to the former conditioned on  $\eta(t)\leq [cn]$. Since $\eta(0) =0$ it also follows that $\mathbb{P}(A_1) \leq q$. Which completes the prove of inequality (\ref{qineq1}).
		
		Using the same line of argument for inequality (\ref{qineq2}) we obtain for $i \geq 3$
		\begin{multline*}
			\mathbb{P}(A_i|A_j^c, j=1,2,\dots,i-1) = \frac{\mathbb{P}(A_i,A_{i-1}^c|A_j^c, j=1,2,\dots,i-2)}{\mathbb{P}(A_i|A_j^c, j=1,2,\dots,i-2)} \\
			\geq \mathbb{P}(A_i,A_{i-1}^c|A_j^c, j=1,2,\dots,i-2)\\ 
			= \mathbb{P}(A_i|A_j^c, j=1,2,\dots,i-2) - \mathbb{P}(A_i,A_{i-1}|A_j^c, j=1,2,\dots,i-2)\\ \geq 
			\mathbb{P}(A_i|A_j^c, j=1,2,\dots,i-2) - \mathbb{P}(A_i,A_{i-1})\\= \mathbb{P}(A_i|A_j^c, j=1,2,\dots,i-2)-q
			\mathbb{P}(A_i|A_{i-1}).
		\end{multline*}
		Because $A_i$ is independent of $A_j$ for $|j-i|\geq 2$, we have  $\mathbb{P}(A_i|A_j^c, j=1,2,\dots,i-2)=q$ and therefore, $\mathbb{P}(A_i|A_j^c, j=1,2,\dots,i-1) \geq q(1-\mathbb{P}(A_i|A_{i-1}))$.
		So, our proof is complete if we show that $\mathbb{P}(A_i|A_{i-1}) = \mathbb{P}(A_3|A_{2})= o_n(1)$.

		Define the following events: $B_1$ is the event that $T_2-T_1 > \sqrt{\log(n)}$ and $B_2$ is the event that the first hitting time of $[cn]$ is before $T_2 - 2\frac{\log 2}{\alpha}$.
		We show that the probabilities $\mathbb{P}(A_3| B_1, B_2,A_2)$, $\mathbb{P}(B_1^c|A_2)$ and   $\mathbb{P}(B_2^c|A_2,B_1)$ are all $o_n(1)$. Combining this with 
		$$\mathbb{P}(A_3|A_2) \leq \mathbb{P}(A_3| B_1, B_2,A_2) + \mathbb{P}(B_1^c|A_2) + \mathbb{P}(B_2^c|A_2,B_1)$$
		gives the desired result.
		
		From $\frac{\alpha (T_2-T_1)}{2 \log(n)} \xrightarrow[]{p} 1$, it immediately follows that  
		$\mathbb{P}(B_1^c|A_2) = o_n(1)$.
		
		Since for $i\geq 2$,  $\eta(T_{i-1})$ has the stationary distribution, conditioned on $A_i$, the first time in $I_i$, $\eta(t) \geq [cn]$ is stochastically dominated by a uniform ramdom variable on the times when edges are updated in $I_i$.
		This gives that  $\mathbb{P}(B_2^c|A_2,B_1) = o_n(1)$.
		
		Let $s= [\frac{\beta}{2 \alpha} n]$ and $j = \left[\frac{[cn] + s}{2}\right]$. For $c>\frac{\beta}{2 \alpha}$,
		if $\eta(t_0)=[cn]$, then by Lemma \ref{stationaryedgeslow},  
		$t_1 = \inf\{t\geq t_0; \eta(t)= k\} \xrightarrow[]{p}  t_0 + \frac{\log(2)}{\alpha}$ as $n \to \infty$.
		By Lemma \ref{expoprobab} we know that  $$\mathbb{P}(\text{$\{\eta(t), t \geq t_1\}$ reaches $[cn]$ before it reaches $s$})$$  is exponentially small in $n$.
		We already know that $\tau_s([cn])$ is w.h.p. exponentially large in $n$.
		This gives that $\mathbb{P}(A_3| B_1, B_2,A_2) = o_n(1)$ and the proof is complete.
	\end{proof}
	
	Because of the similarities between the dynamic Erd\H{o}s-R\'enyi graph and a SIS epidemic, we mention here that the methods used in \cite{Andersson98} might be used to derive similar results for hitting times.
	\subsection{The size of the largest component.}
	\label{sizecomponent}
	When studying any type of random graph the size of the largest component is often of interest. For instance, we might ask how long it will take for the size of the largest component in the dynamic graph to \textit{exceed}, say, $\epsilon  n$? We suggest that one way to approach this problem is through the edge process $\{\eta(t),\ t\geq 0 \}$. 
	
	In a static Erd\H{o}s-R\'enyi graph there is an intimate connection between the number of edges in the graph and the size of the largest component. Namely, if $G(n,M(n))$ is a static Erd\H{o}s-R\'enyi graph with $n$ vertices and $M(n)$ edges, where $M(n) = [c n]$, then as $n \to \infty$, the size of the largest component exhibits three different behaviors depending on $c$. The following holds with high probability: (i) if $c<1/2$, called the subcritical case, then the size of the largest component if of order $\log (n)$; (ii) if $c>1/2$, called the supercritical case, it is of order $n$; (iii) if $c=1/2$, called the critical case, then the largest component is of order $n^{2/3}$, see \cite[p.130]{Boll}. 
	A modification of a classical result from Erd\H{o}s-R\'enyi \cite{ER} gives the following lemma for the supercritical case:
	\begin{corollary}
		\label{sizeCorollary}
		Let $\{ \mathcal{G}(n,M(n)) \}$ be a sequence of Erd\H{o}s-R\'enyi graphs with $n$ vertices and $M(n)$ edges. Let $|C(n,M(n))|$ be the size of the largest component of $\mathcal{G}(n,M(n))$. \\
		Then, for every $0 < \epsilon < 1$ there exist a $c_{\epsilon}>\frac{1}{2}$ such that if $M(n) = [c_{\epsilon} n]$ then, 
		\[
		\frac{|C(n,M(n))|}{n} \xrightarrow[]{p} \epsilon \text{ as } n \to \infty.
		\]
		Furthermore,
		\[
		c_{\epsilon} = \frac{- \log (1-\epsilon)}{2  \epsilon}.
		\]
	\end{corollary}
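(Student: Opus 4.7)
The plan is to reduce to the classical giant component theorem for the $G(n,p)$ model and then transfer the result to the edge-counting model $G(n,M(n))$ via a coupling argument. Recall the standard result: if $p = \lambda/n$ with $\lambda > 1$, then $|C(n,p)|/n \xrightarrow{p} \rho(\lambda)$, where $\rho = \rho(\lambda) \in (0,1)$ is the unique positive solution of $1 - \rho = e^{-\lambda \rho}$. This is proved, for instance, via a branching process approximation (the exploration process on a vertex's component is well-approximated by a Poisson($\lambda$) Galton-Watson tree) together with a sprinkling/second-moment argument to show that the large components merge into one.

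To pin down $c_\epsilon$, I would invert the relation $\rho(\lambda) = \epsilon$. Setting $1 - \epsilon = e^{-\lambda \epsilon}$ and solving gives $\lambda_\epsilon = -\log(1-\epsilon)/\epsilon$, which is strictly greater than $1$ for $\epsilon \in (0,1)$ by convexity of $-\log$. Since $G(n,p)$ has $N \cdot p = \binom{n}{2} \cdot \lambda_\epsilon/n = \tfrac{n-1}{2}\lambda_\epsilon$ edges in expectation, we match $[c_\epsilon n]$ by setting
\[
c_\epsilon := \frac{\lambda_\epsilon}{2} = \frac{-\log(1-\epsilon)}{2\epsilon},
\]
which is indeed strictly greater than $1/2$.

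To transfer from $G(n,p)$ to $G(n,M(n))$ with $M(n) = [c_\epsilon n]$, I would use monotone coupling and concentration of the binomial edge count. Fix $\delta > 0$ small and set $p_\pm = \lambda_{\epsilon \pm \delta}/n$ (with $\delta$ small enough that $\lambda_{\epsilon \pm \delta} > 1$ and $\rho(\lambda_{\epsilon + \delta}) - \rho(\lambda_{\epsilon - \delta}) < 3\delta$; continuity of $\rho$ in $\lambda$ makes this possible). Since the number of edges in $G(n,p_\pm)$ is Binomial$(N,p_\pm)$ with mean $c_{\epsilon \pm \delta}\,(n-1)$, by Chebyshev the edge count of $G(n,p_-)$ is strictly less than $M(n)$ w.h.p.\ and that of $G(n,p_+)$ strictly greater than $M(n)$ w.h.p. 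Using the standard monotone coupling that realizes $G(n,p_-) \subseteq G(n,M(n)) \subseteq G(n,p_+)$ on the good event, monotonicity of $|C(n,\cdot)|$ in the edge set gives
\[
\rho(\lambda_{\epsilon - \delta}) - o_p(1) \leq \frac{|C(n,M(n))|}{n} \leq \rho(\lambda_{\epsilon + \delta}) + o_p(1).
\]
Since $\delta$ was arbitrary and $\rho(\lambda_\epsilon) = \epsilon$, letting $\delta \to 0$ yields $|C(n,M(n))|/n \xrightarrow{p} \epsilon$.

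The main obstacle is invoking the classical giant component theorem cleanly; the body of the paper cites $\cite{Boll}$ and $\cite{ER}$, so I would quote the $G(n,p)$ version from $\cite{Boll}$ as a black box rather than reproving it. The only nontrivial technical step is the coupling: I would be careful to state the coupling on a single probability space so that the inclusions $G(n,p_-) \subseteq G(n,M(n)) \subseteq G(n,p_+)$ and the corresponding component-size inequalities hold on an event of probability $1 - o(1)$. Everything else is a continuity argument on the explicit function $\rho(\lambda)$.
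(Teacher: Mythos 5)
Your proof is correct. Note that the paper itself supplies \emph{no} proof of this corollary: it is stated as ``a modification of a classical result from Erd\H{o}s-R\'enyi \cite{ER}'' and left there, so there is no paper argument to compare yours against. Your route---invoking the $G(n,p)$ giant-component theorem ($|C(n,p)|/n \xrightarrow{p} \rho(\lambda)$, $1-\rho = e^{-\lambda\rho}$, for $p=\lambda/n$, $\lambda>1$), inverting to get $\lambda_\epsilon = -\log(1-\epsilon)/\epsilon$ and $c_\epsilon=\lambda_\epsilon/2$, and then transferring to $G(n,M)$ by the usual single-probability-space monotone coupling with binomial concentration---is sound and complete. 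It is, however, slightly longer than necessary: the classical Erd\H{o}s--R\'enyi theorem (and the treatment in \cite{Boll}) is already phrased for the uniform-edge model $G(n,M)$, where $M=[cn]$ with $c>1/2$ gives $|C(n,M)|/n \xrightarrow{p} \rho$ with $1-\rho = e^{-2c\rho}$; from that form the corollary is just the inversion $\rho \mapsto c$ you already carry out, and the coupling and concentration step can be skipped entirely. One cosmetic point: your side-condition ``$\rho(\lambda_{\epsilon+\delta})-\rho(\lambda_{\epsilon-\delta})<3\delta$'' holds automatically and equals $2\delta$, since $\lambda_{\epsilon\pm\delta}$ is by definition the rate whose survival probability is $\epsilon\pm\delta$; what you actually need from continuity and monotonicity of $\rho$ on $(1,\infty)$ is only that $\lambda_{\epsilon\pm\delta}$ exist and exceed $1$ for $\delta$ small enough, which you note.
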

	Hence, if $c = \frac{- \log (1-\epsilon)}{2  \epsilon}$, then $|C(n, [c n])| \approx n \epsilon$ with high probability. However, 
	$
	\frac{|C(n,[c n])|}{n} \xrightarrow[]{p} \epsilon \text{ does not imply } P(|C(n,[c n])|\geq n\epsilon)\to 1.
	$
	However, for the latter to hold we may just choose $M(n)=[c'n]$, for any $c'>c$.
	
	This in combination with Theorem \ref{mainHittingTimes} can be used  to provide a (asymptotic) bound on the expected time it takes for the size of the largest component in the dynamic Erd\H{o}s-R\'enyi graph to \textit{exceed} $\epsilon  n$, for given $\epsilon > 0$. 
	In a static Erd\H{o}s-R\'enyi we would need more than $[c n]$ edges, say $[c'n]$ edges, where $c'>c = \frac{- \log (1-\epsilon)}{2  \epsilon}$ for this to be very likely---e.g. we can take $c'=\frac{- \log (1-(\epsilon+\delta))}{2  (\epsilon+\delta)}$, $\delta\in(0,1-\epsilon)$, so that the fraction of vertices in the largest component converges in probability to $\epsilon+\delta$. For the dynamic graph, we can instead wait until that many edges has appeared, and be very certain that the size of the largest component has exceeded $\epsilon  n$ no later than that time. This leads to the following lemma:
	\begin{lemma}
		\label{connectionLemma}
		Let $\hat{\tau}(\epsilon n)$ be the first time  the dynamic Erd\H{o}s-R\'enyi graph, starting with no edges, has a component of size at least $\epsilon n$, $\epsilon\in(0,1)$. Let $\tau_0(i)$ be the time it takes for the dynamic graph to reach, starting with no edges, $i$ edges.
		Then for all $c'>c = \frac{-\log(1-\epsilon)}{2\epsilon}$,
		\begin{align*}
			\mathbb{E}[\hat{\tau}(\epsilon  n)] = O(\mathbb{E}[\tau_0([c' n])]).
		\end{align*}
	\end{lemma}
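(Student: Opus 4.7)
The plan is to combine two facts with a restart argument: (i) at the hitting time $\tau_0([c'n])$, the dynamic graph $G(\tau_0([c'n]))$ has the same law as the static $G(n,[c'n])$, and (ii) by Corollary \ref{sizeCorollary} such a static graph has a component of size at least $\epsilon n$ with probability tending to $1$, so $\hat\tau(\epsilon n)\leq \tau_0([c'n])$ with probability $1-o(1)$.

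For (i), note that the $N$ edge processes $\{\chi_{u,v}\}$ are i.i.d., all starting in state $0$, so the joint law of $G(t)$ at any fixed time is invariant under permutations of edge labels. Because $\tau_0([c'n])$ is a stopping time depending only on $\eta(t)=\sum_{u,v}\chi_{u,v}(t)$ (which is invariant under such permutations), this symmetry is preserved at $\tau_0([c'n])$, so conditional on $\eta(\tau_0([c'n]))=[c'n]$ the graph $G(\tau_0([c'n]))$ is uniform over all graphs with $[c'n]$ edges. For (ii), one checks that $c_{\epsilon}=\tfrac{-\log(1-\epsilon)}{2\epsilon}$ is continuous and strictly increasing from $\tfrac12$ to $\infty$ on $(0,1)$, so one can pick $\epsilon'\in(\epsilon,1)$ with $c_{\epsilon'}\leq c'$; then the monotonicity of $|C|$ in the number of edges, together with Corollary \ref{sizeCorollary} applied at $\epsilon'$, yields $q_n:=\mathbb{P}(|C(G(n,[c'n]))|<\epsilon n)\to 0$.

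To pass from this high-probability statement to a bound on the expectation I would use a restart argument. On failure at the first attempt, wait an additional time $T_s$ (the fastest time to stationarity from Theorem \ref{Fastesttimetostationary}, with $\mathbb{E}[T_s]=O(\log n)$), after which the graph is in its stationary distribution and independent of the past. From stationarity, the expected time to next hit $[c'n]$ edges is at most $\mathbb{E}[\tau_0([c'n])]$ (by stochastic monotonicity of $\eta$ in its starting value), and applying the exchangeability argument again the graph at this next hitting time is once more distributed as $G(n,[c'n])$, giving an independent success probability $1-q_n$. Iterating and invoking the strong Markov property yields
\[
\mathbb{E}[\hat\tau(\epsilon n)]\leq \mathbb{E}[\tau_0([c'n])] + \tfrac{q_n}{1-q_n}\bigl(\mathbb{E}[T_s]+\mathbb{E}[\tau_0([c'n])]\bigr);
\]
since $q_n$ decays (super-)exponentially in $n$ by standard concentration for the giant component in $G(n,[c'n])$ with $c'>1/2$, the term $q_n\mathbb{E}[T_s]$ is $o(1)$, and the right-hand side is $O(\mathbb{E}[\tau_0([c'n])])$ as claimed.

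The main obstacle is verifying cleanly the distributional identity $G(\tau_0([c'n]))\overset{d}{=}G(n,[c'n])$ at a random stopping time rather than a deterministic one, and the independence of successive attempts after the waiting time $T_s$; both reduce to the i.i.d.\ structure of the edge processes combined with the strong Markov and strong stationarity properties already established in the paper.
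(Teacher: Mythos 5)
Your proof is correct and shares the two essential ingredients with the paper's: the exchangeability of the i.i.d.\ edge processes, which makes $G(\tau_0([c'n]))$ uniform over all graphs with $[c'n]$ edges (the paper phrases this as ``$A$ is independent of $\tau_0([c'n])$''), and Corollary~\ref{sizeCorollary} to make the failure probability vanish. Where you differ is in handling the failure case. The paper writes the self-bounding inequality $\mathbb{E}[\hat\tau(\epsilon n)]\le \mathbb{E}[\tau_0([c'n])]+\mathbb{E}[\hat\tau(\epsilon n)]\,\mathbb{P}(A^c)$ and solves for $\mathbb{E}[\hat\tau(\epsilon n)]$; the step $\mathbb{E}[\hat\tau(\epsilon n)\mid A^c]\le \mathbb{E}[\hat\tau(\epsilon n)]+\mathbb{E}[\tau_0([c'n])]$ tacitly relies on a monotone coupling (starting from any configuration with $[c'n]$ edges, the graph dominates, subgraph-wise, the graph started from no edges, so the residual hitting time of a large component is stochastically dominated by $\hat\tau(\epsilon n)$). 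You instead interpose a strong stationary time $T_s$ between attempts, which makes the successive attempts explicitly independent and turns the argument into a clean geometric sum, at the cost of an extra $\mathbb{E}[T_s]=O(\log n)$ per failed round. That extra term is harmless: either $\mathbb{E}[\tau_0([c'n])]$ is exponentially large (as in Corollary~\ref{compsizecor}, where $c'>\beta/(2\alpha)$), or $q_n$ decays exponentially, so $q_n\,\mathbb{E}[T_s]\to 0$. Both routes are valid; yours is slightly longer but makes the independence structure of the restart completely explicit, which the paper's version leaves implicit in the coupling.
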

	\begin{proof}
		Let $A$ be the event that at time $\tau_0([c' n])$ the largest component of the graph is at least size $\epsilon  n$ and let $A^c$ be the complement of $A$. Observe that $A$ is independent of $\tau_0([c' n])$, since the edge processes are all independent and therefore all edge configurations have the same probability at time $\tau_0([c' n])$. By Corollary \ref{sizeCorollary} $P(A) \to 1$ as $n \to \infty$. We note that,
		\begin{multline*}
			\mathbb{E}[\hat{\tau}(\epsilon  n)] =  
			\mathbb{E}[\hat{\tau}(\epsilon  n)|A]\mathbb{P}(A) + 
			\mathbb{E}[\hat{\tau}(\epsilon  n)|A^c]\mathbb{P}(A^c)\\
			\leq  \mathbb{E}[\tau_0([c'  n])]\mathbb{P}(A) + 
			(\mathbb{E}[\hat{\tau}(\epsilon  n)] + \mathbb{E}[\tau_0([c'  n])])\mathbb{P}(A^c)\\
			= \mathbb{E}[\tau_0([c'  n])] + \mathbb{E}[\hat{\tau}(\epsilon  n)]\mathbb{P}(A^c).
		\end{multline*}
		Hence,
		\[
		\mathbb{E}[\hat{\tau}(\epsilon  n)] \leq \frac{\mathbb{E}[\tau_0(c'n)]}{1-\mathbb{P}(A^c)}.
		\]
		It follows that $\mathbb{E}[\hat{\tau}(\epsilon  n)] = O(\mathbb{E}[\tau_0([c' n])])$.
	\end{proof}
	
	We can now prove Corollary \ref{compsizecor}.
	\begin{proof}[Proof of Corollary \ref{compsizecor}]
		In this proof order terms are for $\hat{\epsilon} \to 0$.\\
		Recall that we consider the critical case $\alpha=\beta$. Let $c' = \frac{-\log(1-\hat{\epsilon})}{2\hat{\epsilon}}$, $\hat{\epsilon}>\epsilon$, so that $c'>c$. Note,
		by Taylor approximation, $$c' = \frac{-\log(1-\hat{\epsilon})}{2\hat{\epsilon}} = 1/2+\hat{\epsilon}/4+\hat{\epsilon}^2/6+O(\hat{\epsilon}^3)>1/2 = \frac{\beta}{2\alpha}.$$ 
		Hence, by Lemma \ref{connectionLemma} and Theorem \ref{mainHittingTimes} (c),
		\begin{align*}
			\mathbb{E}[\hat{\tau}(\epsilon  n)] = O(n^{-1/2})e^{n(c'\log(2c') +  1/2-c'   )}.
		\end{align*}
		Recall, $\log(1+x) = \sum_{k=1}^{\infty} (-1)^{n+1}\frac{x^n}{n}$ if $|x|<1$. Hence,
		\begin{multline*}
			\log(2c') = \log(1 +(2c'-1)) = (2c'-1)+(2c'-1)^2/2+O(c'^3)
			= \hat{\epsilon}/2 + 5\hat{\epsilon}^2/24+O(\hat{\epsilon}^3).
		\end{multline*}
		if $|2c'-1|<1$ (which can be solved numerically and implies that $ \epsilon < 0.7968$).
		Hence,
		\begin{align*}
			c'\log(2c')-c'+1/2 = \hat{\epsilon}^2/16+O(\hat{\epsilon}^3)
		\end{align*}
		and the corollary follows.
	\end{proof}
	
	We now show another way of deriving the expected time until a large component appears, in the critical case setting---which is indeed sharper. 
	Theorem 3.1 of \cite{OCon98} gives that for the static Erd\H{o}s-R\'enyi graph $G(n,p)$ with $p=1/n$, 
	and $C(n,p)$ the size of its largest connected component, $$\frac{1}{n}\log\left(\mathbb{P}\left(\frac{C(n,p)}{n} \geq \epsilon\right)\right)  \to -I_1(\epsilon) = -\epsilon^3/8 + O(\epsilon^4),$$
	where $I_1(x) = - x \log(1-e^{-x}) + x \log(x) + (1-x) \log(1-x) +x(1-x) $ is the rate function of the appropriate large deviation principle as given in \cite{OCon98}.
	We know that in time $O(\log(n))$ all edges are updated, in the sense of defining property (iia) of the dynamic Erd\H{o}s-R\'enyi graph and the strong stationary time in Proposition \ref{Fastesttimetostationary}. Let $T_0=0$ and for $i =1,2,\cdots$, let $T_i$ be the first time after $T_{i-1}$ that updates have occurred in the interval $(T_{i-1},T_i]$ at all edges. Note that for $i=1,2,\cdots$ the graphs $G(T_i)$ are independent of each other and of the $T_i$s by the definition of the $T_i$s. Furthermore, each  $G(T_i)$ contains a component of size at least $\epsilon n$ with probability $\mathbb{P}(n^{-1}C(n,p) \geq \epsilon)$, and therefore $$\mathbb{E}[\hat{\tau}(\epsilon  n)] \leq O(\log(n))/\mathbb{P}(n^{-1}C(n,p) \geq \epsilon) = e^{n (\epsilon^3/8+  O_{\epsilon}(\epsilon^4)) + o_n(n) }$$ where $f(n) = o_n(n)$ means that $\lim\limits_{n\to \infty} f(n)/n = 0$.\\
	It is also easy to show that $I(\epsilon) < c_{\epsilon}  \log (2c_{\epsilon})+1/2-c_{\epsilon}$, where $c_{\epsilon} = \frac{-\log (1-\epsilon)}{2\epsilon}$, see figure 2, showing that the O'Connell approach produces a sharper bound. From comparing the bounds we deduce that, in the critical case of the dynamic graph, a large component emerges through an unlikely configuration a few edges. Where we with few edges mean less than expected in the static setting. 
	\begin{figure}[H]
		\leftskip -1cm
		\includegraphics[width = 15cm]{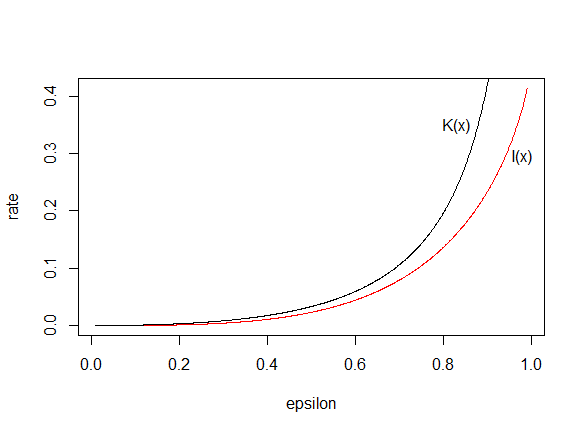}
		\caption{$K(\epsilon) = c_{\epsilon}  \log (2c_{\epsilon})+1/2-c_{\epsilon}$ (black) and $I(\epsilon)= - \epsilon \log(1-e^{-\epsilon}) + \epsilon \log(\epsilon) + (1-\epsilon) \log(1-\epsilon) +\epsilon(1-\epsilon)$ (red) }
	\end{figure}

	\section*{Acknowledgments}
	We would like to thank Mia Deijfen for helpful comments on the manuscript. 
	\bibliography{articleRef}{}

\begin{thebibliography}{10}

\bibitem{Altm95}
{\sc M.~Altmann}, {\em {Susceptipble-infected-removed epidemic models with
  dynamic partnerships}}, J. Math. Biol., 33 (1995), pp.~661--675.

\bibitem{Andersson98}
{\sc H.~Andersson and B.~Djechiche}, {\em {A threshold limit theorem for the
  stochastic logistic epidemic}}, J. Appl. Prob., 35 (1998), pp.~662--670.

\bibitem{ash}
{\sc R.~Ash}, {\em Information Theory}, Dover Books on Mathematics, Dover
  Publications, 2012.

\bibitem{Boll}
{\sc B.~{Bollobás}}, {\em Random Graphs}, Cambridge University Press,
  Cambridge, 2001.

\bibitem{SMP}
{\sc P.~{Brémaud}}, {\em Markov Chains: Gibbs Fields, Monte Carlo Simulation,
  and Queues}, Springer, New York, 1999.

\bibitem{ER}
{\sc P.~Erdős and A.~Rényi}, {\em On the evolution of random graphs}, in
  Publication of the Mathematical Institute of the Hungarian Academy of
  Sciences, 1960, pp.~17--61.

\bibitem{Fill}
{\sc J.~A. Fill}, {\em Time to stationarity for a continuous-time markov
  chain}, Probability in the Engineering and Informational Sciences, 5 (1991),
  pp.~61--71.

\bibitem{Fill09}
{\sc J.~A. Fill}, {\em On hitting times and fastest strong stationary times for
  skip-free and more general chains}, Journal of Theoretical Probability, 22
  (2009), pp.~587--600.

\bibitem{Grim01}
{\sc G.~Grimmett and D.~Stirzaker}, {\em Probability and random processes},
  Oxford university press, third~ed., 2001.

\bibitem{OCon98}
{\sc N.~O'Connell}, {\em {Some large deviation results for sparse random
  graphs}}, Probab. Theory Relat. Fields, 110 (1998), pp.~277--285.

\end{thebibliography}
	\bibliographystyle{siam}

\end{document}